\newtheorem{theorem}{Theorem}[section]
\newtheorem{lemma}[theorem]{Lemma}
\theoremstyle{definition}
\theoremstyle{remark}
\newtheorem{remark}[theorem]{Remark}
\numberwithin{equation}{section}
\renewcommand{\Re}{\operatorname{Re}}
\newcommand{\R}{\mathbb{R}}
\newcommand{\Corr}{\mathop{\mathrm{Corr}}}
\newcommand{\I}{\mathop{\mathrm{I}}}
\newcommand{\rg}{\mathop{\mathrm{\mathfrak{rg}}}}
\newcommand{\fs}{\mathop{\mathrm{\mathfrak{fs}}}}
\newcommand{\p}{\mathop{\mathrm{\mathfrak{p}}}}
\newcommand{\m}{\mathop{\mathrm{\mathfrak{m}}}}
\newcommand{\s}{\mathop{\mathrm{\mathfrak{s}}}}
\newcommand{\G}{\mathop{\mathcal{G}}}
\newcommand{\Exp}{\mathbb E}
\begin{document}

\title{\textbf{Heavy-tailed fractional Pearson diffusions}}
\date{}
\maketitle

\centerline{\author{\textbf{N.N. Leonenko}$^{\textrm{a},}$, \textbf{I. Papi\'c}$^{\textrm{b}}$, \textbf{A. Sikorskii}$^{\textrm{c},}$\footnote{Corresponding author.}, \textbf{N. \v{S}uvak}$^{\textrm{b}}$}}

{\footnotesize{
$$\begin{tabular}{l}
  $^{\textrm{a}}$ \emph{School of Mathematics, Cardiff University, Senghennydd Road, Cardiff CF244AG, UK} \\
  $^{\textrm{b}}$ \emph{Department of Mathematics, J.J. Strossmayer University of Osijek, Trg Ljudevita Gaja 6, HR-31000 Osijek, Croatia} \\
  $^{\textrm{c}}$ \emph{619 Red Cedar Road, Department of Statistics and Probability, Michigan State University, East Lansing, MI 48824, USA} \\
\end{tabular}$$}}

\bigskip

\noindent\textbf{Abstract.}
We define heavy-tailed fractional reciprocal gamma and Fisher-Snedecor diffusions by a non-Markovian time change in the corresponding Pearson diffusions. Pearson diffusions are governed by the backward Kolmogorov equations with space-varying polynomial coefficients and are widely used in applications. The corresponding fractional reciprocal gamma and Fisher-Snedecor diffusions are governed by the fractional backward Kolmogorov equations and have heavy-tailed marginal distributions in the steady state. We derive the explicit expressions for the transition densities of the fractional reciprocal gamma and Fisher-Snedecor diffusions and strong solutions of the associated Cauchy problems for the fractional backward Kolmogorov equation.

\bigskip

\noindent\textbf{Key words.} Fractional diffusion, Fractional backward Kolmogorov equation, Hypergeometric function, Mittag-Leffler function, Pearson diffusion, Spectral representation, Transition density, Whittaker function.

\bigskip

\noindent\textbf{Mathematics Subject Classification (2010):} 33C05, 33C47, 35P10, 60G22.

\bigskip

\section{Introduction} \label{intro}

Recent developments in fractional processes including fractional diffusions have been motivated by applications that require modeling of phenomena in heterogeneous media using fractional partial differential equations, see \cite{Kochubey1989, Meerschaert2004, MainardiWaves}. Stochastic processes governed by these equations are found in many applications in science, engineering and finance (see \cite{Gorenflo2003, Magdziarz2009, MetzlerKlafter2000, MetzlerKlafter2004, Scalas2006, Stanislavsky2009}). In hydrology, the fractional time derivative models sticking and trapping of contaminant particles in a porous medium (\cite{Meerschaertetall2003}) or a river flow (\cite{Chakrabortyetall2009}). In finance, the fractional derivative in time models delays between trades (\cite{Scalas2006}), and has been used to develop the Black-Scholes formalism in this context (\cite{Magdziarz2009, Stanislavsky2009}). In statistical physics, fractional time derivative appears in the equation for a continuous time random walk limit and reflects random waiting times between particle jumps (\cite{Meerschaert2004, MetzlerKlafter2000, MetzlerKlafter2004}). For recent developments in treating the space-time fractional diffusion equations we refer to \cite{Nane2016103} and \cite{MagdziarzProceedings}. Detailed discussion of such equations is also found in \cite{Chen2012479}.  An application of fractional partial differential equations in the framework of spherical random fields is studied in \cite{D'Ovidio2016146}. For new results on semi-Markov dynamics and Kolmogorov's integro-differential equations we refer to \cite{MeerschaertArXiv} and \cite{OrsingherArXiv}.

Pearson diffusions is a special class of diffusion processes governed by the backward Kolmogorov equation
\begin{equation}\label{backward}
        \frac{\partial p(x,t;y)}{\partial t}= \mu(y)\frac{\partial p(x,t;y)}{\partial y}+\frac{\sigma^2(y)}{2}\frac{\partial^2 p(x,t;y)}{\partial y^2}.
        \end{equation}
with varying polynomial coefficients: $\mu (x)$ is polynomial of the first degree, and $\sigma ^2(x)$ is polynomial of at most second degree.  Stationary distributions of these diffusions  belong to the class of Pearson distributions (see \cite{PearsonTables_1914}). The study of classical Markovian Pearson diffusions began with Kolmogorov (in \cite{Shiryayev1992}) and \cite{Wong1964}, and continued in \cite{FormanSorensen_PD, LeonenkoSuvak_S_2010} and \cite{AvramLeonenkoSuvak_FS_PE_2011, AvramLeonenkoSuvak_FS_TSH_2012, AvramLeonenkoSuvak_WPD_2013, AvramLeonenkoSuvak_FS_SR_2013}. Diffusion processes from this family include the famous Ornstein-Uhlenbeck process (\cite{OU}) and the Cox-Ingersoll-Ross (CIR) process (\cite{CIR}) that are widely used in applications.  Other Pearson diffusions are Jacobi diffusion with beta stationary distribution, reciprocal gamma, Fisher-Snedecor and Student diffusions, named after their stationary distributions. The last three have heavy-tailed stationary distributions and therefore are known as heavy-tailed Pearson diffusions.

Pearson diffusions can be defined as stochastic processes by specifying their Markovian nature, their transition density via equation \eqref{backward}, and the distribution of the initial value. In contrast, fractional diffusions, due to the non-Markovian nature, generally cannot be defined by the governing equations and the initial distributions alone. Alternative approaches are required to define fractional diffusions as the stochastic processes.
For the three non-heavy-tailed Pearson diffusions (OU, CIR, Jacobi), their fractional analogs were defined in \cite{LeonenkoMeerschaertSikorskii_FPD_2013}, where it was also shown that these processes are governed by the time-fractional Fokker-Planck and backward Kolmogorov equations with space-varying polynomial coefficients. The spectral theory for the generators of the non-heavy-tailed Pearson diffusions was used to obtain the explicit strong solutions to the corresponding fractional Cauchy problems. Cauchy problems for time-fractional equations with fractional derivative of distributed order are considered in \cite{Mijena}.

This paper extends the results from \cite{LeonenkoMeerschaertSikorskii_FPD_2013} to two of the heavy-tailed Pearson diffusions. Namely, we define fractional reciprocal gamma and Fisher-Snedecor diffusions by time changing the corresponding non-fractional heavy-tailed diffusions by the inverse of the standard stable subordinator. We also discuss an alternative definition using stochastic calculus for time-changed semimartingales developed in \cite{Kobayashi2011}. Next, we obtain the spectral representations for the transition densities of the fractional reciprocal gamma and Fisher-Snedecor diffusions  and describe the first- and second-order properties of these processes. Finally, we use the general result of Baeumer and Meerschaert (2001) that  links the solutions of time-fractional and non-fractional Cauchy problems under the appropriate conditions on the operator appearing in  these problems. In general, finding strong solutions of time-fractional differential equations is a difficult problem, especially if the coefficients are variable.  We are able to obtain the explicit strong solutions to the backward Kolmogorov equations using properties of the spectrum of the generator with polynomial coefficients. Since the structure of the spectrum of the generators of heavy-tailed Pearson diffusions is different and more complex than in the non-heavy-tailed case, different analytical methods are used to derive the corresponding spectral representations and strong solutions.

\section{Pearson diffusions} \label{PD}

Continuous distributions with densities satisfying the famous Pearson equation
\begin{equation}\label{Peq}
\frac{\p^{\prime}(x)}{\p(x)} = \frac{(a_{1} - 2b_{2})x +(a_{0} - b_{1})}{b_{2}x^{2} + b_{1}x + b_{0}}
\end{equation}
are called Pearson distributions (see \cite{PearsonTables_1914}). The family of Pearson distributions is categorized into six well known and widely applied parametric subfamilies: normal, gamma, beta, Fisher-Snedecor, reciprocal gamma and Student distributions. The last three distributions are heavy-tailed (see \cite{LeonenkoSuvak_RG_2010, LeonenkoSuvak_S_2010} and \cite{AvramLeonenkoSuvak_FS_SR_2013}) and therefore are of special interest in stochastic modeling. One set of models that produces these distributions uses classical Markovian diffusions driven by the Brownian motion $(W_{t}, \, t \geq 0)$. In this setting, the stochastic differential equation (SDE)
\begin{equation}\label{SDE}
dX_t = \mu(X_t)dt+ \sigma(X_t)dW_t, \,\, t \geq 0, \,\, \mu(x)=a_0+a_1 x, \,\, \sigma(x)=\sqrt{2b(x)}=\sqrt{2(b_{2}x^{2} + b_{1}x + b_{0})}, \quad t \geq 0,
\end{equation} has polynomial infinitesimal parameters that are related to the polynomials in the Pearson equation \eqref{Peq}: the drift $\mu(x)$ is  linear and the squared diffusion $\sigma^{2}(x)$ is at most quadratic. Because the coefficients are polynomial, classical results \cite[Theorem 5.2.1]{Oksendal} imply the existence and uniqueness of the strong solutions of SDE \eqref{SDE}, and the solutions  are called Pearson diffusions. Their stationary distributions belong to the Pearson family, and it is often convenient to re-parametrize
\begin{equation}\label{parametrize}
\mu(x)=a_{0}+a_{1}x=-\theta(x-\mu), \quad
\sigma^{2}(x) =  2\theta k (b_{2}x^{2} + b_{1}x + b_{0}),
\end{equation}
where $\mu \in \mathbb{R}$ is the stationary mean depending on coefficients of the Pearson equation \eqref{Peq},  $\theta > 0$ is the scaling of time determining the speed of the mean reversion,  and
 $k$ is a positive constant. Note that we need $\sigma^{2}(x)>0$ on the diffusion state space $(l, L)$.

Pearson diffusions could be categorized into six subfamilies, according to the degree of the polynomial $b(x)$ and, in the quadratic case $b(x)=b_{2}x^{2} + b_{1}x + b_{0}$, according to the sign of its leading coefficient $b_{2}$ and the sign of its discriminant $\Delta$:

\begin{itemize}
  \item constant $b(x)$ - Ornstein-Uhlenbeck (OU) process with normal stationary distribution,
  \item linear $b(x)$ - Cox-Ingersol-Ross (CIR) process with gamma stationary distribution,
  \item quadratic $b(x)$ with $b_{2}<0$ - Jacobi diffusion with beta stationary distribution,
  \item quadratic $b(x)$ with $b_{2}>0$ and $\Delta>0$ - Fisher-Snedecor (FS) diffusion with the Fisher-Snedecor stationary distribution,
  \item quadratic $b(x)$ with $b_{2}>0$ and $\Delta=0$ - reciprocal gamma (RG) diffusion with reciprocal gamma stationary distribution,
  \item quadratic $b(x)$ with $b_{2}>0$ and $\Delta<0$ - Student diffusion with the Student stationary distribution.
\end{itemize}

The first three types of Pearson diffusions have stationary distributions with all moments, and are studied in detail in \cite{LeonenkoMeerschaertSikorskii_FPD_2013}. In this paper we focus on analytical and probabilistic properties of the other two subfamilies, the FS and RG diffusions that have heavy-tailed stationary distributions. The sixth subfamily (Student diffusion) also has a heavy-tailed stationary distribution, but its spectral properties differ from those of the FS and RG diffusions, and Student fractional diffusion will be dealt with in a separate paper. Results concerning the spectral representation of the transition density of Pearson diffusions and estimation of their parameters could be found in the series of papers (\cite{AvramLeonenkoSuvak_FS_PE_2011, AvramLeonenkoSuvak_FS_TSH_2012, AvramLeonenkoSuvak_WPD_2013, AvramLeonenkoSuvak_FS_SR_2013,  LeonenkoSuvak_RG_2010, LeonenkoSuvak_S_2010}).

For a Markov process $\left(X_t,\,t\ge 0\right)$ with absolutely continuous transition probability distribution, let $p(x, t; y, s) = \frac{d}{dx} P(X_{t} \leq x \vert X_{s} = y)$ be the corresponding transition density. In this paper we consider only time-homogeneous diffusions for which $p(x, t; y, s) = p(x, t-s; y, 0)$ for $t > s$ and we write $p(x, t; y) = \frac{d}{dx}P(X_t \leq x | X_0=y)$. With $\mu(x)$ and $\sigma(x)$ given by \eqref{parametrize}, the transition densities satisfy the following partial differential equations (PDEs) with the point-source initial conditions.

\bigskip

\noindent{\bf{Forward Kolmogorov or Fokker-Planck equation:}}
        \begin{equation*}\label{KFE}
        \frac{\partial p(x,t;y)}{\partial t}= -\frac{\partial}{\partial x}\left(\mu(x) p(x,t;y) \right)+\frac{1}{2}\frac{\partial^2}{\partial x^2}\left(
        \sigma^2(x) p(x,t;y) \right).
        \end{equation*}
        In this PDE, the current state $y$ is constant, and the equation describes the "forward evolution" of the diffusion. The second-order differential operator in this equation is the  Fokker-Planck operator
        \begin{equation*}\label{FPoperator}
        \mathcal{L} g(x) = -\frac{\partial}{\partial x} \left( \mu(x) g(x) \right) + \frac{1}{2} \frac{\partial^{2}}{\partial x^{2}} \left( \sigma^2(x) g(x) \right).
        \end{equation*}

\bigskip

\noindent{\bf{Backward Kolmogorov equation:}}
        \begin{equation*}\label{KBE}
        \frac{\partial p(x,t;y)}{\partial t}= \mu(y)\frac{\partial p(x,t;y)}{\partial y}+\frac{\sigma^2(y)}{2}\frac{\partial^2 p(x,t;y)}{\partial y^2}.
        \end{equation*}
        In this PDE, the future state $x$ is constant, and the equation describes the "backward evolution" of the diffusion. The second-order differential operator in this equation is the infinitesimal generator of the diffusion
        \begin{equation}\label{InfGen}
        \mathcal{G} g(y) = \left ( \mu(y) \frac{\partial}{\partial y}+\frac{\sigma^2(y)}{2}\frac{\partial^2}{\partial y^2} \right )g(y).
        \end{equation}
        According to \cite{McKean1956},  generator $\mathcal{G}$ is closed, generally unbounded, negative semidefinite, self-adjoint operator densely defined on the space $L^{2}\left( (l,L), \m \right)$ of square integrable functions with the weight $\m $ equal to the diffusion speed density:
				\begin{equation*} \m(x) = \frac{2}{\sigma^{2}(x) \s(x)},
				\end{equation*}
				where
				\begin{equation*} \s(x) = \exp{\left\{-2 \int\limits^{x} \frac{\mu(y)}{\sigma^{2}(x)} \, dy \right\}} \end{equation*} is the diffusion scale density (see \cite{BorodinSalminen} and \cite{KarlinTaylor2_1981}).
				The domain of the generator is the following space of functions:
        \begin{equation}\label{domain}
        \{ f \in L^{2}\left((l,L), \mathfrak{m}\right) \cap C^2\left( (l,L) \right) \colon \mathcal{G}f \in L^{2}\left( (l,L), \mathfrak{m} \right) \,\,\text{and } f \,\,\text {satisfies boundary  conditions at } l\text{ and } L \}.
        \end{equation}
For the FS and RG diffusions with the state space $(0, \infty )$, the boundary conditions are (see \cite{McKean1956}):
$$\lim\limits_{y \to 0} \frac{g^{\prime}(y)}{\s(y)} = \lim\limits_{y \to \infty} \frac{g^{\prime}(y)}{\s(y)} = 0. $$

\section{Fractional Cauchy problems}\label{fC}

Consider the fractional Cauchy problem involving the generator \eqref{InfGen}:
\begin{equation} \label{FbackwardCauchy}
\frac{\partial^{\alpha} q(y,t)}{\partial t^{\alpha}}= \mathcal{G} q(y,t), \quad q(y,0)=g(y),
\end{equation}
where $\displaystyle\frac{\partial^{\alpha}}{\partial t^{\alpha}}$  is the Caputo fractional derivative of order $0<\alpha<1$. It is defined as
$$\frac{d^{\alpha}f(x)}{dx^{\alpha}} = \frac{1}{\Gamma(1-\alpha)} \int\limits_{0}^{\infty} \frac{d}{dx} f(x-y) y^{-\alpha} \, dy,$$
or equivalently for  absolutely continuous functions as $$\frac{d^{\alpha}f(x)}{dx^{\alpha}} = \frac{1}{\Gamma(1-\alpha)} \int\limits_{0}^{x} (x-y)^{-\alpha} f'(y) \, dy.$$
For more details on fractional derivatives see \cite[Chapter 2]{MeerschaertSikorskii_2011}.

The ordinary (non-fractional) Cauchy problem for the generator is
\begin{equation} \label{backwardCauchy}
\frac{\partial q(y,t)}{\partial t}= \mathcal{G} q(y,t), \quad q(y,0)=g(y).
\end{equation}
As described in \cite{LeonenkoMeerschaertSikorskii_FPD_2013}, separation of variables approach could be used to find heuristic solutions to \eqref{FbackwardCauchy} and
\eqref{backwardCauchy} in the form $q(y,t)=T(t)\varphi (y)$,  where functions $T$ and $\varphi$ may depend on $x$ and $\alpha$.
  Then for \eqref{FbackwardCauchy} we have
\[
\frac{1}{T(t)}\frac{d^{\alpha}T(t)}{dt^{\alpha}}=\frac{\mathcal{G}\varphi(y)}{\varphi(y)},
\]
assuming that $T$ and $\varphi$ do not vanish. The last equation obviously holds if and only if both sides are equal to a constant denoted $-\lambda$ (so that $\lambda >0$), leading to two equations:
\begin{equation} \label{sep_1}
\frac{d^{\alpha}T(t)}{dt^{\alpha}}=-\lambda T(t)
\end{equation}
and
\begin{equation}\label{sep_2}
\mathcal{G} \varphi = -\lambda \varphi.
\end{equation}
In the case of non-fractional Cauchy problem \eqref{backwardCauchy}, \eqref{sep_2} is the same, while \eqref{sep_1} is replaced by
\begin{equation} \label{nsep_1}
\frac{dT(t)}{dt}=-\lambda T(t).
\end{equation}
Equations \eqref{sep_1} and \eqref{nsep_1} have well-known strong solutions,  $\mathcal {E}_\alpha (-\lambda t)$ and $\exp \{-\lambda t\}$, respectively, where
\begin{equation*}
\mathcal {E}_{\alpha}(-\lambda t^{\alpha}) = \sum\limits_{j=0}^{\infty} \frac{(-\lambda t^{\alpha})^{j}}{\Gamma(1+\alpha j)}
\end{equation*}
is the Mittag-Leffler function (see, for example, \cite{Simon_Mittag}).

Regarding the space part, both fractional and non-fractional Cauchy problems lead to the Sturm-Liouville problem for the negative generator $(-\mathcal G)$. Therefore, spectral properties of the generator of the corresponding diffusions are linked to the problem of finding strong solutions of the corresponding Cauchy problems.

For non-heavy-tailed diffusions considered in \cite{LeonenkoMeerschaertSikorskii_FPD_2013}, the spectrum of $(-\mathcal G)$ is purely discrete, consisting of infinitely many simple eigenvalues $(\lambda_{n}, \, n \in \mathbb{N})$. Corresponding eigenfunctions are classical systems of orthogonal polynomials: Hermite polynomials for OU process, Laguerre polynomials for CIR process and Jacobi polynomials for Jacobi diffusion. According to the spectral classification of \cite{Linetsky_2007spectral}, these non-heavy-tailed Pearson diffusions belong to the spectral category I. The summary of their spectral properties is found in \cite{LeonenkoMeerschaertSikorskii_FPD_2013}.

Generators of heavy-tailed Pearson diffusions have more complicated structure of the spectrum. The spectrum of the operator $(-\mathcal{G})$ in case of RG and FS diffusions consists of two disjoint parts - the discrete part and the absolutely continuous part of multiplicity one, see \cite{LeonenkoSuvak_RG_2010} and \cite{AvramLeonenkoSuvak_FS_SR_2013}. These two diffusions belong to the spectral category II in the Linetsky's classification (see \cite{Linetsky_2007spectral}).

In case of the Student diffusion the spectrum of the generator $(-\mathcal{G})$ consists of the discrete part and the absolutely continuous part of multiplicity two, see \cite{LeonenkoSuvak_S_2010}. This diffusion belongs to the Linetsky's spectral category III.

As mentioned in the Introduction, governing fractional equations alone may not always define stochastic processes. Therefore, we define fractional counterparts of Pearson diffusions via a non-Markovian time-change in Pearson diffusions of spectral category II . Furthermore, we derive the explicit expressions for their transition densities and discuss the first- and second-order properties of these fractional processes. Finally, we revisit the fractional Cauchy problems and obtain strong solutions for the corresponding fractional backward Kolmogorov equations.

\section{Fractional Pearson diffusions of spectral category II and their properties} \label{PD_spectrum}

\subsection{Fractional Pearson diffusions of spectral category II} \label{FPD}

Let $X = \left( X(t), \, t \geq 0 \right)$ be the Pearson diffusion solving  \eqref{SDE}. Introduce $(D_t, \, t \geq 0)$, the standard stable subordinator with index $0 < \alpha < 1$, independent of the process $X$. $D_{t}$ is a homogeneous L\`evy process with the Laplace transform
\[
\Exp [e^{-sD_t}]=\exp\{-ts^{\alpha}\}
\]
for $0<\alpha <1$. Its inverse process
\[
E_t=\inf \{x>0:D_x>t \}.
\]
is non-Markovian, non-decreasing, and for every $t$ random variable $E_t$ has a density, which will be denoted by $f_t(\cdot )$. The Laplace transform of this density is (see e.g., \cite{PSW2005})
\begin{equation} \label{Mittag}
\Exp [e^{-sE_t}]=\int_{0}^{\infty}e^{-sx}f_t(x)dx=\mathcal {E}_{\alpha}(-st^{\alpha}).
\end{equation}
 The density of $E_t$ is related to the density of the standard stable subordinator $D$ as follows. With $g_\alpha $ denoting the density of $D_{1}$
\begin{equation*}\label{etdensity}
f_t(u)=\frac {t}{\alpha }u^{-1-1/\alpha }g_\alpha (tu^{-1/\alpha}),
\end{equation*}
see \cite[p. 111]{MeerschaertSikorskii_2011}.
Define the fractional Pearson diffusion $(X_{\alpha}(t), \, t \geq 0)$ via time-change of the Pearson diffusion
\begin{equation}\label{fpddef}
X_{\alpha}(t) = X(E_t), \,\, t \ge 0.
\end{equation}
The process $X_\alpha(t)$ is non-Markovian. We define its transition density $p_{\alpha}(x,t;y)$ as
\begin{equation}\label{trans}
P(X_{\alpha}(t) \in B | X_{\alpha}(0)=y)=\int_{B}p_{\alpha}(x,t;y)dx
\end{equation}
for any Borel subset $B$ of $(l, L)$.

\subsection{Fractional Pearson diffusions as solutions of  the SDEs}

The fractional Pearson diffusions defined via a time-change of the ordinary (non-fractional) Pearson diffusions satisfy the special types of SDEs considered in \cite{Kobayashi2011}. Let $\{\mathcal {F}_t, t\ge 0\}$ be the natural filtration associated with the Brownian motion from equation \eqref{SDE}. Since $E_t$ has almost surely continuous sample paths, for any $t>0$ $[E_{t-}, E_t]$ contains only one point, and the Brownian motion and the solution of SDE \eqref{SDE} $X(t)$  are in synchronization with the time change $E_t$ as defined in \cite{Kobayashi2011}. This synchronization is key to the stochastic calculus for the semimartingale $B(E_t)$ with respect to filtration $\{\mathcal {F}_{E_t}, t\ge 0\}$. See also \cite{Schilling} for the discussion on martingale properties of $B(E_t)$ and other processes obtained as time-changes of the Brownian motion using inverse  subordinators.

Specialized to fractional Pearson diffusions, according to the duality Theorem  \cite[ Theorem 4.2, part (1)]{Kobayashi2011},  when the process $X(t)$ satisfies SDE \eqref{SDE} with the initial condition $X(0)=X_0$, the time-changed process $X_\alpha (t)=X(E_t)$ satisfies the SDE
\begin{equation}\label{fSDE}
dX_\alpha (t)=\mu (X_\alpha (t))dE_t+\sigma (X_\alpha (t))dB_{E_t}
\end{equation}
with the initial condition $X_\alpha (0)=X_0$.

Further, when $\mu $ and $\sigma^2 $ are polynomials of the first and second degree, respectively, as specified in \eqref{SDE},  \cite[Lemma 4.1]{Kobayashi2011} ensures the existence and uniqueness of the strong solution of \eqref{fSDE}, giving another possible definition of the fractional Pearson diffusions as solutions of this SDE.
The solution can be represented in the
following integral form:
\begin{equation*}
X_\alpha (t)=X(E_{t}) = \int\limits_{0}^{E_{t}} \left( a_{0}+a_{1}X(s)
\right) ds + \int\limits_{0}^{E_{t}} \sqrt{2(
b_{0}+b_{1}X(s)+b_{2}(X(s))^2)} dB(s).
\label{FPD_Ito_rep}
\end{equation*}
The integrals in this representation are the Lebesgue
and the It\^{o} integrals under the continuous time-change $t \mapsto
E_{t}$, in light of the change-of-variable formula from
\cite[Theorem 3.1]{Kobayashi2011}. This representation could be useful for simulating paths of fractional Pearson diffusions. The discrete schemes for the underlying densities and their error bounds can be found in \cite{Kelbert20161145}. For similar approaches to obtaining solutions of such SDEs we refer to \cite{Scalas2014385}.

\subsection{Spectral representation of the transition density of the fractional reciprocal gamma diffusion} \label{SSS_RG}
We begin this subsection by listing the necessary facts about the non-fractional reciprocal gamma diffusion, then use these facts to derive a representation for the transition density of the fractional reciprocal gamma diffusion.

The reciprocal gamma diffusion satisfies the SDE
\begin{equation*}
dX_{t} = -\theta \left(X_{t} - \frac{\gamma}{\beta - 1}\right)dt + \sqrt{\frac{2 \theta}{\beta-1} X_{t}^{2}} \, dW_{t}, \quad t \geq 0,  \label{RGsde}
\end{equation*}
with $\theta > 0$ and invariant density
\begin{equation}
\rg(x) = \frac{\gamma^{\beta}}{\Gamma(\beta)} \, x^{-\beta - 1} e^{-\frac{\gamma}{x}} {\I}_{\langle 0,\infty \rangle }(x) \label{RGdensity}
\end{equation}
with parameters $\beta > 1$ and $\gamma > 0$, where the latter requirement ensures the existence of the stationary mean $\gamma/(\beta-1)$.

If $\beta > 2$ the variance of the invariant distribution exists, and the autocorrelation function of the RG diffusion is
\begin{equation}
\rho(t) = \mathop{\mathrm{Corr}}(X_{s + t}, X_{s}) = e^{-\theta t}, \quad t \geq 0, \quad s \geq 0. \label{RGacf}
\end{equation}

The generator is
\begin{equation*}
\G f(x) = \frac{\theta}{\beta - 1} \, x^{\beta + 1}e^{\frac{\gamma}{x}} \, \frac{d}{dx} \, \left(x^{-\beta + 1} e^{-\frac{\gamma}{x}} f^{\prime}(x) \right) =
\end{equation*}
\begin{equation*}
\hspace*{25mm} = \frac{\theta}{\beta-1} \, x^{2} f^{\prime \prime }(x) - \theta \left(x - \frac{\gamma}{\beta-1} \right) \, f^{\prime }(x), \quad x > 0. \label{RGD_InfGen}
\end{equation*}

In this case operator $(-\mathcal{G})$ has the finite discrete spectrum $\sigma_{d}(-\G) \subset [0, \Lambda )$, and the purely absolutely continuous spectrum $\sigma_{ac}(-\G)$ of multiplicity one in $( \Lambda, \infty )$, where $$\Lambda = \frac{\theta \beta^{2}}{4(\beta-1)}, \quad \beta > 1,$$ is the cutoff between the two parts of the spectrum.

The discrete part of the spectrum consists of the eigenvalues
\begin{equation}
\lambda_{n} = \frac{\theta}{\beta - 1} \, n (\beta - n), \quad n \in \left\{0, 1, \ldots, \left\lfloor \frac{\beta}{2} \right\rfloor \right\}, \quad \beta > 1. \label{RGD_diseigen}
\end{equation}
The corresponding eigenfunctions are Bessel polynomials given by the Rodrigues formula
\begin{equation*}\label{RGD_BesselRodForm}
\widetilde{B}_{n}(x) = x^{\beta+1} e^{\frac{\gamma}{x}} \frac{d^{n}}{dx^{n}} \, (x^{2n-(\beta + 1)} e^{-\frac{\gamma}{x}}), \quad n \in \left\{0, 1, \ldots, \left\lfloor \frac{\beta}{2} \right\rfloor \right\}, \quad \beta > 1.
\end{equation*}
The normalized Bessel polynomials are
\begin{equation} \label{RGD_NormBesselRodForm}
B_{n}(x) = K_{n} \, \widetilde{B}_{n}(x),
\end{equation}
where
\begin{equation*}
K_{n} = \frac{(-1)^{n}}{\gamma^{n}} \sqrt{\frac{(\beta - 2n)\Gamma(\beta)}{\Gamma(n+1) \Gamma(\beta - n + 1)}} = \frac{(-1)^{n}}{\gamma^{n}} \sqrt{\frac{\Gamma(\beta)}{n! \Gamma(\beta - 2n)} \, \left( \prod_{k=0}^{n-1} (\beta - n - k) \right)^{-1}}  \label{RGD_BesselNormConst}
\end{equation*}
is the normalizing constant.

Since the absolutely continuous part of the spectrum is of the form $( \Lambda, \infty )$, its elements could be parametrized as
\begin{equation*}
\lambda = \Lambda + \frac{\theta k^{2}}{\beta-1} = \frac{\theta}{\beta-1} \left( \frac{\beta^{2}}{4} + k^{2} \right), \quad \beta > 1, \quad k \in \mathbb{R}. \label{RGD_conteigen}
\end{equation*}
The spectral representation of the transition density of the RG diffusion consists of two parts:
\begin{equation}
p_{1}(x; x_{0}, t) = p_{d}(x; x_{0}, t) + p_{c}(x; x_{0}, t). \label{RGD_d+c}
\end{equation}
The discrete part of the spectral representation is
\begin{equation*}
p_{d}(x; x_{0}, t) = \rg(x) \, \sum\limits_{n=0}^{\left\lfloor \frac{\beta}{2} \right\rfloor}e^{-\lambda_{n} t} \, B_{n}(x_{0}) \, B_{n}(x),  \label{RGD_disspec}
\end{equation*}
where $\rg(\cdot)$ is the invariant density \eqref{RGdensity}, eigenvalues $\lambda_{n}$ are given by \eqref{RGD_diseigen} and normalized Bessel polynomials are given by \eqref{RGD_NormBesselRodForm}. The continuous part of the spectral representation is given in terms of the elements $\lambda$ of the absolutely continuous part of the spectrum of the operator $(-\mathcal{G})$
\begin{equation*}
p_{c}(x; x_{0}, t) = \rg(x) \, \frac{1}{4\pi} \int\limits_{\frac{\theta \beta^{2}}{4(\beta-1)}}^{\infty}e^{-\lambda t}\,b(\lambda)\,\psi(x, -\lambda)\,\psi(x_0, -\lambda)\,d\lambda,  \label{RGD_contspec}
\end{equation*}
where
\begin{equation}\label{RGparameters}
b(\lambda)=\frac{\gamma^{-\beta-1}}{k(\lambda)} \, \left| \frac{ \Gamma^{\frac{1}{2}}(\beta) \, \Gamma\left( -\frac{\beta}{2} + ik(\lambda) \right)}{\Gamma\left(2ik(\lambda)\right)} \right|^{2}, \qquad
k(\lambda) = -i \sqrt{\frac{\beta^{2}}{4} - \frac{\lambda (\beta - 1)}{\theta}}.
\end{equation}
The function
\begin{equation}\label{psi}
\psi(x, -\lambda) = \gamma^{\frac{\beta + 1}{2}} \,_{2}F_{0}\left( -\frac{\beta}{2} + ik(\lambda), -\frac{\beta}{2} - ik(\lambda); ; -\frac{x}{\gamma} \right)
\end{equation} is the solution of the Sturm-Liouville equation $\mathcal{G}f(x) = -\lambda f(x)$, $\lambda > 0$, where $_{2}F_{0}$ is the special case of the generalized hypergeometric function $_{p}F_{q}$ with $p=2$ and $q=0$, see \cite{Slater} or \cite{NIST}.

More details on the RG diffusion and this representation of the transition density are found in \cite{LeonenkoSuvak_RG_2010}. We now proceed with the statement of our result on the spectral representation of the transition density of the fractional RG diffusion.

\begin{theorem}
The transition density of the fractional RG diffusion is given by
\begin{equation}\label{frtranrg}
p_\alpha (x,t;x_0)=\sum \limits_{n=0}^{\lfloor\frac{\beta}{2}\rfloor} \mathfrak{rg}(x)\, B_n(x)\,B_n(x_0)\,
\mathcal{E}_{\alpha}(-\lambda_n t^{\alpha})
+\frac{\mathfrak{rg}(x)}{4\pi} \int\limits_{\frac{\theta \beta^2}{4(\beta-1)}}^{\infty}
\mathcal{E}_{\alpha}(-\lambda t^{\alpha})\,b(\lambda)\,\psi(x,-\lambda)\,\psi(x_0,-\lambda)\,d\lambda,
\end{equation}
where Bessel polynomials $B_n$ are given by equation \eqref{RGD_NormBesselRodForm}, the solution of the Sturm-Liouville equation $\psi$ is given by \eqref{psi} with $b(\lambda )$ given by \eqref{RGparameters}.
\end{theorem}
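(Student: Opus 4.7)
The plan is to exploit the time-change construction $X_\alpha(t) = X(E_t)$ together with the independence of the inverse stable subordinator $E_t$ from the base Pearson diffusion $X$. First, I would condition on the value of $E_t$ and apply the tower property (equivalently, the law of total probability) to obtain, for every Borel set $B \subset (0,\infty)$,
\begin{equation*}
P(X_\alpha(t) \in B \mid X_\alpha(0) = x_0) = \int_{0}^{\infty} P(X(u) \in B \mid X(0) = x_0)\, f_t(u)\, du,
\end{equation*}
using that $E_0 = 0$ almost surely, so $X_\alpha(0) = X(0) = x_0$. Reading off the transition density via \eqref{trans} yields
\begin{equation*}
p_\alpha(x, t; x_0) = \int_{0}^{\infty} p_1(x, u; x_0)\, f_t(u)\, du,
\end{equation*}
where $p_1$ is the non-fractional RG transition density recalled in the previous subsection.

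Next, I would substitute the spectral decomposition $p_1 = p_d + p_c$ into this representation. For the discrete part only finitely many eigenvalues appear (those indexed by $n \le \lfloor \beta/2 \rfloor$), so Fubini's theorem applies trivially and each term contributes
\begin{equation*}
\int_{0}^{\infty} e^{-\lambda_n u}\, f_t(u)\, du = \mathcal{E}_\alpha(-\lambda_n t^\alpha)
\end{equation*}
by the Laplace transform identity \eqref{Mittag}. This reproduces the finite sum in \eqref{frtranrg} verbatim.

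For the absolutely continuous part, the substitution leads to the double integral
\begin{equation*}
\frac{\mathfrak{rg}(x)}{4\pi} \int_{0}^{\infty} f_t(u) \int_{\Lambda}^{\infty} e^{-\lambda u}\, b(\lambda)\, \psi(x,-\lambda)\, \psi(x_0,-\lambda)\, d\lambda\, du, \qquad \Lambda = \frac{\theta \beta^2}{4(\beta-1)}.
\end{equation*}
The aim is to interchange the order of integration and recognize the inner integral in $u$ as $\mathcal{E}_\alpha(-\lambda t^\alpha)$ via \eqref{Mittag}. The main obstacle is the Fubini justification over the unbounded $\lambda$-range: I would handle it by showing absolute integrability, leveraging the fact that the non-fractional spectral representation from \cite{LeonenkoSuvak_RG_2010} already guarantees $\int_\Lambda^\infty e^{-\lambda u}|b(\lambda)\,\psi(x,-\lambda)\,\psi(x_0,-\lambda)|\, d\lambda < \infty$ for every fixed $u > 0$, and then verifying that this $u$-dependent bound is integrable against the probability density $f_t$ (for instance, via a dominated-convergence argument after truncation of the $\lambda$-integral at a large upper limit and passage to the limit).

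The overall scheme parallels the non-heavy-tailed case in \cite{LeonenkoMeerschaertSikorskii_FPD_2013}; the new feature here is that the Mittag-Leffler function arises uniformly in the eigenvalue parameter through the Laplace transform of $f_t$, so the same computation handles the discrete eigenvalues $\lambda_n$ and the continuum $\lambda \in (\Lambda, \infty)$ in one stroke. Combining the discrete and continuous contributions then produces \eqref{frtranrg}.
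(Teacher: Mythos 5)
Your overall architecture matches the paper's: condition on $E_t$ using independence, identify $\int_0^\infty e^{-\lambda u}f_t(u)\,du=\mathcal{E}_\alpha(-\lambda t^\alpha)$ via \eqref{Mittag}, handle the finite discrete sum trivially, and reduce the continuous part to a Fubini interchange over $(\Lambda,\infty)\times(0,\infty)$. The discrete part and the first (Tonelli) interchange are fine. The gap is that the Fubini justification for the continuous part --- which is the entire analytic content of this theorem --- is not actually carried out. You defer it to the claim that the non-fractional representation ``already guarantees'' $\int_\Lambda^\infty e^{-\lambda u}\,|b(\lambda)\,\psi(x,-\lambda)\,\psi(x_0,-\lambda)|\,d\lambda<\infty$ for each fixed $u>0$, followed by an unspecified domination against $f_t$. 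That fixed-$u$ bound is only finite because of the exponential factor $e^{-\lambda u}$; the spectral density $b(\lambda)\,|\psi(x,-\lambda)\,\psi(x_0,-\lambda)|$ itself decays only like $\lambda^{-1/2}$ (this follows from the Whittaker-function asymptotics \eqref{Whitt_asymp} and the Gamma asymptotics \eqref{Gamma_asymp}), so it is \emph{not} integrable in $\lambda$ on its own, and your $u$-dependent bound blows up as $u\to 0^+$. Making your route work would require quantifying that blow-up (roughly $u^{-1/2}$) and checking it is integrable against $f_t$ near $u=0$ using the small-$u$ behavior of the inverse-subordinator density --- none of which you do.

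The paper integrates in the other order: it first performs the $\tau$-integral (legitimately, by Tonelli on the nonnegative integrand $e^{-\lambda\tau}f_t(\tau)$), which produces the factor $\mathcal{E}_\alpha(-\lambda t^\alpha)$, and then uses the two-sided Mittag--Leffler bound \eqref{Mittag_bound} to gain an extra $\lambda^{-1}$ of decay. Combined with the $\lambda^{-1/2}$ decay of the spectral density obtained from the Whittaker and Gamma asymptotics, this gives $|h(\lambda)|=O(\lambda^{-3/2})$, which is integrable on $(\Lambda,\infty)$ and verifies the Fubini hypothesis \eqref{RG_Fubini_proof}. That estimate is the key missing idea in your proposal: without the Mittag--Leffler decay in $\lambda$ (or an equivalent quantitative control of your $u$-dependent bound), the interchange is not justified.
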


\begin{proof}
Since the Pearson diffusion $X(t)$ is independent of the time change $E_t$, using \eqref{RGD_d+c} and \eqref{Mittag} together with the Fubini argument, we have
\begin{align} \label{FRG_tonelli}
P(X_{\alpha}(t) \in B | X_{\alpha}(0)=x_0) &= \int_{0}^{\infty}P(X_{1}(\tau) \in B | X_{1}(0)=x_0)\,f_{t}(\tau)\,d\tau \nonumber \\
&=\int_{0}^{\infty}\int_{B}^{}p_1(x,\tau;x_0)\,f_t(\tau)\,dx \, d\tau  \nonumber \\
&=\int_{B}^{}\int_{0}^{\infty}(p_d(x,\tau;x_0)+p_c(x,\tau;x_0))\, f_t(\tau)\,d\tau \,dx
\end{align}
$$=\int_{B}^{} \left[\int\limits_{0}^{\infty} \sum \limits_{n=0}^{\lfloor\frac{\beta}{2}\rfloor} \mathfrak{rg}(x) \,B_n(x_0)\,B_n(x)\,e^{- \lambda_n \tau}\,f_t(\tau)\,d\tau
+\frac{\mathfrak{rg}(x)}{4\pi}\int_{0}^{\infty } \int\limits_{\frac{\theta \beta^2}{4(\beta-1)}}^{\infty}e^{-\lambda \tau}\,f_t(\tau)\,b(\lambda)\,\psi(x,-\lambda)\,\psi(x_0,-\lambda)\,d\lambda \, d\tau  \right ]dx
$$
\begin{equation} \label{FRG_fubini}
=\int_{B}^{}  \left [ \sum \limits_{n=0}^{\lfloor\frac{\beta}{2}\rfloor} \mathfrak{rg}(x) \,B_n(x)\,B_n(x_0)\,\mathcal{E}_{\alpha}(-\lambda_n t^{\alpha})
+\frac{\mathfrak{rg}(x)}{4\pi} \int\limits_{\frac{\theta \beta^2}{4(\beta-1)}}^{\infty} \mathcal{E}_{\alpha}(-\lambda t^{\alpha})\,b(\lambda)\,\psi(x,-\lambda)\,\psi(x_0,-\lambda)\,d\lambda  \right ]dx.
\end{equation}

The change of the order of integration in  \eqref{FRG_tonelli} is justified by the Fubini-Tonelli Theorem \cite[Theorem 8.8 (a)]{Rudin} since functions $p_1$ and $f_t$ are non-negative.
In contrast, change of the order of integration in \eqref{FRG_fubini} cannot be justified by the Fubini-Tonelli Theorem since the integrand
$$g(\lambda,\tau)=e^{-\lambda \tau}\,f_t(\tau)\,b(\lambda)\,\psi(x,-\lambda)\,\psi(x_0,-\lambda)$$
is not necessarily non-negative. To justify this step using the Fubini Theorem \cite[Theorem 8.8 (b-c)]{Rudin}, below we show that
\begin{equation}\label{RG_Fubini_proof}
\int\limits_{\frac{\theta \beta^2}{4(\beta-1)}}^{\infty}\int\limits_{0}^{\infty}|g(\lambda,\tau)| \,d \tau \, d\lambda < \infty.
\end{equation}
Let $$h(\lambda)=\mathcal{E}_{\alpha }(-\lambda t^{\alpha})\,b(\lambda)\,\psi(x,-\lambda)\,\psi(x_0,-\lambda).$$
Since

\begin{align*}
\int\limits_{\frac{\theta \beta^2}{4(\beta-1)}}^{\infty}\int\limits_{0}^{\infty}|g(\lambda,\tau)| d \tau d\lambda
&= \int\limits_{\frac{\theta \beta^2}{4(\beta-1)}}^{\infty}\int\limits_{0}^{\infty}e^{-\lambda \tau}f_t(\tau)|\,b(\lambda)\,\psi(x,-\lambda)\,\psi(x_0,-\lambda)| \, d \tau \, d\lambda \\
&=\int\limits_{\frac{\theta \beta^2}{4(\beta-1)}}^{\infty}\mathcal{E}_{\alpha}(-\lambda t^{\alpha})|b(\lambda)\,\psi(x,-\lambda)\,\psi(x_0,-\lambda)| \, d \lambda \\
&=\int\limits_{\frac{\theta \beta^2}{4(\beta-1)}}^{\infty}|h(\lambda)|\, d \lambda \, ,
\end{align*}
we need to show that
$$\int\limits_{\frac{\theta \beta^2}{4(\beta-1)}}^{\infty}|h(\lambda)| \, d \lambda <\infty.$$
According to \cite{slater1960confluent} or \cite{Buchholz}
$$_{2}F_{0}\left( -\frac{\beta}{2} + ik(\lambda), -\frac{\beta}{2} - ik(\lambda); ; -\frac{x}{\gamma} \right)=\left(\frac{\gamma}{x}\right )^{-\frac{\beta+1}{2}}e^{\frac{\gamma}{2x}}\,\mathcal{W}_{\frac{\beta+1}{2},\, ik(\lambda)}\left(\frac{\gamma}{x}\right),$$
where $\mathcal{W}$ is given by
\begin{equation}\label{Whitt_relation}
\mathcal{W}_{\frac{\beta+1}{2},\, ik(\lambda)}\left(\frac{\gamma}{x}\right)=\frac{\pi}{\sin{(2i\pi k(\lambda))}}\left (\frac{\mathcal{M}_{\frac{\beta+1}{2},\, -ik(\lambda)}\left(\frac{\gamma}{x}\right)}{\Gamma\left( -\frac{\beta}{2}+i k(\lambda) \right)} -\frac{\mathcal{M}_{\frac{\beta+1}{2},\, ik(\lambda)}\left(\frac{\gamma}{x}\right)}{\Gamma\left( -\frac{\beta}{2}-i k(\lambda) \right)} \right )
\end{equation}
and $\mathcal{M}$ is the Whittaker function.

From \cite[p. 94, Equation (1)]{Buchholz}
\begin{equation}\label{Whitt_asymp}\mathcal{M}_{\frac{\beta+1}{2},\, ik(\lambda)}\left(\frac{\gamma}{x}\right) =\frac{\left (\frac{\gamma}{x}\right )^{\frac{1}{2}+ik(\lambda)}}{\Gamma\left(1+2 i k(\lambda)\right)}\left( 1+O ( \left |2 i k(\lambda) \right |^{-1}) \right ), \,\, \lambda \to \infty.
\end{equation}
Using \eqref{Whitt_relation} together with
$$\Gamma\left(2 i k(\lambda)\right )\,\Gamma \left ( 1-2 i k(\lambda) \right )=\frac{\pi}{\sin{(2i\pi k(\lambda))}}$$
we obtain

$$\left | \mathcal{W}_{\frac{\beta+1}{2},\, ik(\lambda)}\left(\frac{\gamma}{x}\right)\right | \leq \left | \Gamma\left(2 i k(\lambda)\right )\,\Gamma \left ( 1-2 i k(\lambda) \right )  \right | \left ( \frac{\left |\mathcal{M}_{\frac{\beta+1}{2},\, -ik(\lambda)}\left(\frac{\gamma}{x}\right)\right |}{\left |\Gamma\left( -\frac{\beta}{2}+i k(\lambda) \right)\right |} +\frac{\left |\mathcal{M}_{\frac{\beta+1}{2},\, ik(\lambda)}\left(\frac{\gamma}{x}\right)\right |}{\left |\Gamma\left( -\frac{\beta}{2}-i k(\lambda) \right)\right |} \right ). $$
Now, \eqref{Whitt_asymp} implies
\begin{align*}\left | \mathcal{W}_{\frac{\beta+1}{2},\, ik(\lambda)}\left(\frac{\gamma}{x}\right)\right | &\leq \left (\frac{\gamma}{x}\right )^{\frac{1}{2}}\left ( \frac{1}{\left |\Gamma \left ( 1+2 i k(\lambda) \right )\right |\,\left |\Gamma\left( -\frac{\beta}{2}+i k(\lambda) \right)\right |} +\frac{1}{\left |\Gamma \left ( 1-2 i k(\lambda) \right )\right |\,\left |\Gamma\left( -\frac{\beta}{2}-i k(\lambda) \right)\right |} \right )  \\
&\times \left | \Gamma\left(2 i k(\lambda)\right )\,\Gamma \left ( 1-2 i k(\lambda) \right )  \right | \,\left( 1+O ( \left |2 i k(\lambda) \right |^{-1}) \right ), \,\, \lambda \to \infty.
\end{align*}
It follows that
\begin{align*}\left | h(\lambda)\right | &\leq  \left (\frac{\gamma^2}{x x_0}\right )^{-\frac{\beta}{2}}e^{\frac{\gamma}{2}\left(\frac{1}{x}+\frac{1}{x_0}\right)} \left ( \frac{1}{\left |\Gamma \left ( 1+2 i k(\lambda) \right )\right |\,\left |\Gamma\left( -\frac{\beta}{2}+i k(\lambda) \right)\right |} +\frac{1}{\left |\Gamma \left ( 1-2 i k(\lambda) \right )\right |\,\left |\Gamma\left( -\frac{\beta}{2}-i k(\lambda) \right)\right |} \right )^2 \\
&\times  \mathcal{E}_{\alpha}(-\lambda t^{\alpha})\, \frac{1}{|k(\lambda)|} \,\left| \frac{ \Gamma^{\frac{1}{2}}(\beta) \, \Gamma\left( -\frac{\beta}{2} + ik(\lambda) \right)}{\Gamma\left(2ik(\lambda)\right)} \right|^{2}
\left | \Gamma\left(2 i k(\lambda)\right )\,\Gamma \left ( 1-2 i k(\lambda) \right )  \right |^2 \,\left( 1+O ( \left |2 i k(\lambda) \right |^{-1}) \right ), \,\, \lambda \to \infty.
\end{align*}
Since
\begin{equation}\label{Gamma_asymp}\Gamma(x+iy) \thicksim \sqrt{2\pi} \cdot |y|^{x-\frac{1}{2}}\cdot e^{-\pi \frac{|y|}{2}}, \,\,\, |y| \to \infty,\end{equation}\\
from \cite[Equation (6.8)]{Simon_Mittag} for $0<\alpha<1$ we have
\begin{equation}\label{Mittag_bound}
\frac{1}{1+\Gamma(1-\alpha)\,\lambda\, t^{\alpha}} \leq \mathcal{E}_{\alpha}(-\lambda t^{\alpha}) \leq \frac{1}{1+\Gamma(1+\alpha)^{-1}\,\lambda \, t^{\alpha}}.
\end{equation}
It follows that
$$\left | h(\lambda)\right | \leq  \left (\frac{\gamma^2}{x x_0}\right )^{-\frac{\beta}{2}}e^{\frac{\gamma}{2}\left(\frac{1}{x}+\frac{1}{x_0}\right)} \frac{1}{1+\Gamma \left ( 1+\alpha \right )^{-1}\lambda t^{\alpha}}\, \frac{\Gamma(\beta)}{|k(\lambda)|}\,\left( 1+O ( \left |2 i k(\lambda) \right |^{-1}) \right ), \,\, \lambda \to \infty$$
and therefore
\begin{equation*}
|h(\lambda)|=O(\lambda^{-\frac{3}{2}}) \text{ as } \lambda \to \infty .
\end{equation*}
Finally, according to \cite[p. 335]{NIST}, since $\lambda \mapsto \psi(x, \lambda)$ is an entire function for a fixed $x$, $\lambda \mapsto |h(\lambda)|$ is also an entire function.
This verifies \eqref{RG_Fubini_proof} and completes the proof of the spectral representation
of the transition density of the fractional RG diffusion.
\end{proof}

\subsection{Spectral representation of the fractional Fisher-Snedecor diffusion} \label{SSS_FS}

The Fisher-Snedecor diffusion satisfies the SDE
\begin{equation*}
dX_{t} = -\theta \left(X_{t} - \frac{\beta}{\beta - 2} \right) \, dt + \sqrt{\frac{4 \theta}{\gamma(\beta - 2)} X_{t} (\gamma X_{t} + \beta) } \, dW_{t}, \quad t \geq 0, \label{FS_sde}
\end{equation*}
with $\theta > 0$ and invariant density
\begin{equation}
\fs(x) =  \frac{ \beta^{\frac{\beta}{2}}}{B\left( \frac{\gamma}{2}, \frac{\beta}{2} \right)} \, \frac{(\gamma x)^{\frac{\gamma}{2} - 1}}{(\gamma x + \beta)^{\frac{\gamma}{2} + \frac{\beta}{2}}} \, \gamma \, {\I}_{\langle 0, \infty \rangle}(x) \label{FSdensity}
\end{equation}
with parameters $\beta > 2$ and $\gamma > 0$, where the latter requirement ensures the existence of the stationary mean $\beta/(\beta-2)$. If $\beta > 4$ the variance of the invariant distribution exists and the autocorrelation function of the FS diffusion is given by \eqref{RGacf}.

The generator is
\begin{equation*}
\G f(x) = \frac{2 \theta}{\gamma(\beta - 2)} \, x^{1 - \frac{\gamma}{2}}(\gamma x + \beta)^{\frac{\gamma}{2} + \frac{\beta}{2}} \, \frac{d}{dx} \, \left(x^{\frac{\gamma}{2}}(\gamma x + \beta)^{1 - \frac{\gamma}{2} - \frac{\beta}{2}} f^{\prime}(x) \right) =
\end{equation*}
\begin{equation*}
\hspace*{5mm} = \frac{2\theta}{\gamma(\beta - 2)} \, x(\gamma x + \beta) f^{\prime \prime}(x) - \theta \left(x - \frac{\beta}{\beta - 2} \right)f^{\prime}(x), \quad x > 0. \label{FSD_InfGen}
\end{equation*}
The structure of the spectrum of $(-\mathcal{G})$ is the same as in the case of the RG diffusion, but with the cutoff $$\Lambda = \frac{\theta \beta^{2}}{4(\beta-1)}, \quad \beta > 2.$$

The discrete part of the spectrum consists of the eigenvalues
\begin{equation}
\lambda_{n} = \frac{\theta}{\beta - 2} \, n (\beta - 2n), \quad n \in \left\{0, 1, \ldots, \left\lfloor \frac{\beta}{4} \right\rfloor \right\}, \quad \beta > 2 \label{FS_diseigen}.
\end{equation}
The corresponding eigenfunctions are FS polynomials given by the Rodrigues formula
\begin{equation*}
\widetilde{F}_{n}(x) = x^{1-\frac{\gamma}{2}} \, (\gamma x + \beta)^{\frac{\gamma}{2} + \frac{\beta}{2}} \, \frac{d^{n}}{dx^{n}} \, \left\{ 2^{n} \, x^{\frac{\gamma}{2} + n - 1} \, (\gamma x + \beta)^{n - \frac{\gamma}{2} - \frac{\beta}{2}} \right\}, \quad n \in \left\{0, 1, \ldots, \left\lfloor \frac{\beta}{4} \right\rfloor \right\}, \quad \beta > 2. \label{FS_RodForm}
\end{equation*}
The normalized FS polynomials are
\begin{equation}
F_{n}(x) = K_{n} \, \widetilde{F}_{n}(x), \label{FS_NormBesselRodForm}
\end{equation}
where
\begin{equation*}
K_{n} = (-1)^{n} \sqrt{\frac{B(\frac{\gamma}{2}, \frac{\beta}{2})}{n! (-1)^{n} (2 \beta)^{2n} B\left( \frac{\gamma}{2} + n, \frac{\beta}{2} - 2n \right)} \, \frac{\Gamma \left( n - \frac{\beta}{2}\right)}{\Gamma \left( 2n - \frac{\beta}{2} \right)}} =
\end{equation*}
\begin{equation*}
\hspace*{1.1cm} = (-1)^{n} \sqrt{\frac{B(\frac{\gamma}{2}, \frac{\beta}{2})}{n! (2 \beta)^{2n} B\left( \frac{\gamma}{2} + n, \frac{\beta }{2} - 2n \right)} \, \left[ \prod_{k=1}^{n} \left( \frac{\beta}{2} + k - 2n \right) \right]^{-1}}.  \label{FS_FSPolNormConst}
\end{equation*}
is the normalizing constant.

Since the absolutely continuous part of the spectrum is of the form $( \Lambda, \infty )$, its elements could be parametrized as
\begin{equation*}
\lambda = \Lambda + \frac{2\theta k^{2}}{\beta-2} = \frac{2\theta}{\beta-2} \left( \frac{\beta^{2}}{16} + k^{2} \right), \quad \beta > 2, \quad k > 0. \label{FS_conteigen}
\end{equation*}
The spectral representation of the transition density of the FS diffusion with parameters $\gamma > 2$ (ensuring the ergodicity), $\gamma \notin \{2(m+1), \, m \in \mathbb{N}\}$, and $\beta > 2$ consists of two parts
\begin{equation}
p_{1}(x; x_{0}, t) = p_{d}(x; x_{0}, t) + p_{c}(x; x_{0}, t).  \label{FS_d+c}
\end{equation}
The discrete part of the spectral representation is
\begin{equation*}
p_{d}(x; x_{0}, t) = {\fs}(x) \, \sum\limits_{n=0}^{\left\lfloor \frac{\beta}{4} \right\rfloor}e^{-\lambda_{n} t} \, F_{n}(x_{0}) \, F_{n}(x),  \label{FS_DisSpec}
\end{equation*}
where $\fs(\cdot)$ is the invariant density \eqref{FSdensity}, eigenvalues $\lambda_{n}$ are given by \eqref{FS_diseigen}
and the normalized FS polynomials are given by \eqref{FS_NormBesselRodForm}. The continuous part of the spectral representation is given in terms of the elements $\lambda$ of the absolutely continuous part of the spectrum of the operator $(-\mathcal{G})$:
\begin{equation*}
p_{c}(x; x_{0}, t) = {\fs}(x) \, \frac{1}{\pi } \int\limits_{\frac{\theta \beta^{2}}{8(\beta-2)}}^{\infty} e^{-\lambda t} \, a(\lambda)
 \, f_{1}(x_{0}, -\lambda) f_{1}(x, -\lambda) \, d\lambda,\label{FS_ContSpec}
\end{equation*}
where
\begin{equation}\label{FSparameters}
k(\lambda) = -i \sqrt{\frac{\beta^{2}}{16} - \frac{\lambda (\beta - 2)}{2 \theta}}, \quad
a(\lambda)=k(\lambda)\left| \frac{ B^{\frac{1}{2}}\left( \frac{\gamma}{2}, \frac{\beta}{2} \right) \, \Gamma\left( -\frac{\beta}{4} + ik(\lambda) \right) \Gamma\left( \frac{\gamma}{2} + \frac{\beta}{4} + ik(\lambda) \right)}{\Gamma\left(\frac{\gamma}{2} \right) \, \Gamma\left(1 + 2ik(\lambda) \right)} \right|^{2}.
\end{equation}
 Function $f_1$ is a solution of the Sturm-Liouville equation $\mathcal{G}f(x) = -\lambda f(x)$, $\lambda > 0$, and is given by
\begin{equation} \label{FS_f1l}
f_{1}(x) = f_{1}(x, s) = \phantom{,}_{2}F_{1} \left(-\frac{\beta}{4} + \sqrt{\frac{\beta^{2}}{16} + \frac{s(\beta-2)}{2\theta}}, -\frac{\beta}{4} - \sqrt{\frac{\beta^{2}}{16}+\frac{s(\beta-2)}{2\theta}}; \frac{\gamma}{2}; -\frac{\gamma}{\beta} \, x \right),
\end{equation}
where $_{2}F_{1}$ is the Gauss hypergeometric function, a special case of generalized hypergeometric function $_{p}F_{q}$ with $p=2$ and $q=1$, see \cite{Slater} or \cite{NIST}.

For more details on FS diffusion and the proof of this representation of the transition density we refer to \cite{AvramLeonenkoSuvak_FS_SR_2013}.

\begin{theorem}
The transition density of fractional FS diffusion is given by
\begin{equation}\label{frtranf}
 p_\alpha (x,t;x_0)=\sum \limits_{n=0}^{\lfloor\frac{\beta}{4}\rfloor} \mathfrak{fs}(x)\, F_n(x_0)\,F_n(x)\,
\mathcal{E}_{\alpha}(-\lambda_n t^{\alpha})
+\frac{\mathfrak{fs}(x)}{\pi } \int\limits_{\frac {\theta \beta^2}{8(\beta-2)}}^{\infty}
\mathcal{E}_{\alpha}(-\lambda t^{\alpha})\,a(\lambda)\,f_1(x_0,-\lambda),f_1(x,-\lambda)\,d\lambda ,
\end{equation}
where the FS polynomials are given by equation \eqref{FS_NormBesselRodForm}, function $f_1$ is given by \eqref{FS_f1l}, and $a(\lambda )$ given by \eqref{FSparameters}.

\end{theorem}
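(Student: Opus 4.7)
The approach mirrors the proof just given for the fractional RG diffusion, with the only genuinely new work being the large-$\lambda$ estimate tailored to the Gauss hypergeometric function $f_1$ instead of the $_2F_0$/Whittaker function. Since $X_\alpha(t)=X(E_t)$ and $E_t$ is independent of the underlying Pearson diffusion $X$, I would condition on $E_t$, substitute the two-part spectral representation \eqref{FS_d+c} for the non-fractional transition density $p_1(x,\tau;x_0)=p_d+p_c$, and integrate against the density $f_t(\tau)$ of $E_t$. The Laplace transform identity \eqref{Mittag} then converts $e^{-\lambda_n\tau}$ and $e^{-\lambda\tau}$ into $\mathcal E_\alpha(-\lambda_n t^\alpha)$ and $\mathcal E_\alpha(-\lambda t^\alpha)$. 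The discrete part involves only a finite sum of non-negative summands, so Fubini--Tonelli applies immediately and produces the first term in \eqref{frtranf}.

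The heart of the argument is justifying the interchange of the $\tau$-integral and the $\lambda$-integral in the continuous part, where the integrand
\[
g(\lambda,\tau)=e^{-\lambda\tau}\,f_t(\tau)\,a(\lambda)\,f_1(x,-\lambda)\,f_1(x_0,-\lambda)
\]
is not sign-definite (the hypergeometric factors with complex parameters can change sign). Following the template from the RG case, I would reduce the problem to showing that
\[
\int_{\Lambda}^{\infty}\!|h(\lambda)|\,d\lambda<\infty,\qquad \Lambda=\frac{\theta\beta^{2}}{8(\beta-2)},
\]
where $h(\lambda)=\mathcal E_\alpha(-\lambda t^\alpha)\,a(\lambda)\,f_1(x,-\lambda)\,f_1(x_0,-\lambda)$, and then invoke Fubini \cite[Theorem 8.8 (b-c)]{Rudin}.

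The main obstacle is the large-$\lambda$ asymptotic for $f_1(x,-\lambda)$. From \eqref{FS_f1l} one has $f_1(x,-\lambda)={}_2F_1(-\tfrac{\beta}{4}+ik(\lambda),-\tfrac{\beta}{4}-ik(\lambda);\tfrac{\gamma}{2};-\tfrac{\gamma}{\beta}x)$ with $k(\lambda)=-i\sqrt{\beta^{2}/16-\lambda(\beta-2)/(2\theta)}$, so $|k(\lambda)|\to\infty$ as $\lambda\to\infty$. I would use the standard large-parameter asymptotic of $_2F_1$ (e.g.\ via the Euler integral representation and stationary-phase, or the explicit formula in \cite{NIST} and \cite{Slater}) to obtain a bound of the form
\[
|f_1(x,-\lambda)|\le C_x\,\frac{\bigl|\Gamma(\tfrac{\gamma}{2})\Gamma(\tfrac{\gamma}{2}+\tfrac{\beta}{2})\bigr|}{\bigl|\Gamma(\tfrac{\gamma}{2}+\tfrac{\beta}{4}+ik(\lambda))\Gamma(\tfrac{\gamma}{2}+\tfrac{\beta}{4}-ik(\lambda))\bigr|}\bigl(1+O(|k(\lambda)|^{-1})\bigr),
\]
valid for $x$ in any fixed compact subset of $(0,\infty)$. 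Substituting this bound and the analogous one at $x_0$ into $|h(\lambda)|$, the $\Gamma$ factors appearing in $a(\lambda)$ from \eqref{FSparameters} combine with the denominators above, and the asymptotic \eqref{Gamma_asymp} makes the surviving $e^{-\pi|k(\lambda)|/2}$ factors cancel in matching pairs. The residual algebraic decay, combined with the Mittag-Leffler tail bound \eqref{Mittag_bound}, yields $|h(\lambda)|=O(\lambda^{-3/2})$ as $\lambda\to\infty$, exactly as in the RG case.

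Finally, since $\lambda\mapsto f_1(x,-\lambda)$ is entire in $\lambda$ for each fixed $x$ (as a hypergeometric function with parameters depending analytically on $\lambda$) and $\mathcal E_\alpha$ and $a(\lambda)$ are continuous on $[\Lambda,\infty)$, $|h|$ is locally bounded, so together with the tail estimate $O(\lambda^{-3/2})$ we conclude $\int_\Lambda^\infty|h(\lambda)|\,d\lambda<\infty$. This validates the Fubini interchange, and reading off the resulting expression gives \eqref{frtranf}. The only delicate step is the hypergeometric asymptotic; the rest is a direct translation of the RG argument.
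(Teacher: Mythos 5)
Your overall architecture is exactly the paper's: condition on the independent time change $E_t$, insert the spectral decomposition \eqref{FS_d+c}, apply the Laplace transform identity \eqref{Mittag}, and reduce the justification of the interchange in the continuous part to showing $\int_{\Lambda}^{\infty}|h(\lambda)|\,d\lambda<\infty$. The gap sits precisely in the step you yourself single out as the only delicate one: your proposed large-$\lambda$ bound for $f_1$ is not correct, and the conclusion $|h(\lambda)|=O(\lambda^{-3/2})$ does not follow from it. The quantity $\Gamma(\tfrac{\gamma}{2})\Gamma(\tfrac{\gamma}{2}+\tfrac{\beta}{2})\big/\bigl(\Gamma(\tfrac{\gamma}{2}+\tfrac{\beta}{4}+ik(\lambda))\Gamma(\tfrac{\gamma}{2}+\tfrac{\beta}{4}-ik(\lambda))\bigr)$ is the Gauss evaluation of the ${}_2F_1$ at argument $z=1$, not an asymptotic at the actual argument $z=-\gamma x/\beta<0$; by \eqref{Gamma_asymp} its modulus grows like $|k(\lambda)|^{1-\gamma-\beta/2}e^{\pi|k(\lambda)|}$. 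Since the exponential factors inside $a(\lambda)$ already cancel among themselves (so that $a(\lambda)=O(|k(\lambda)|^{\gamma-2})$ with no exponential left over), the two uncancelled factors $e^{\pi|k(\lambda)|}$ coming from $|f_1(x,-\lambda)|$ and $|f_1(x_0,-\lambda)|$ survive, and your bound yields $|h(\lambda)|\lesssim |k(\lambda)|^{-\gamma-\beta}\,e^{2\pi|k(\lambda)|}$, which is not integrable: the ``cancellation in matching pairs'' you invoke does not occur.

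What is actually needed, and what the paper uses via \cite[p.~77, Eq.~(17)]{Erdelyi}, is the Watson-type large-parameter asymptotic of ${}_2F_1(a+i\mu,a-i\mu;c;z)$ for fixed $z<0$ and $\mu\to\infty$. It produces the ratio $\Gamma(1+\tfrac{\beta}{4}+ik(\lambda))\big/\Gamma(\tfrac{\gamma}{2}+\tfrac{\beta}{4}+ik(\lambda))$ --- both Gamma factors carrying the \emph{same} imaginary part, so their $e^{-\pi|k(\lambda)|/2}$ factors cancel inside the ratio --- multiplied by $(ik(\lambda))^{-1/2}$ and a bounded oscillatory factor. This gives the purely algebraic bound $|f_1(x,-\lambda)|=O(|k(\lambda)|^{1/2-\gamma/2})$, whence $a(\lambda)\,|f_1(x,-\lambda)f_1(x_0,-\lambda)|=O(|k(\lambda)|^{-1})$ and, with \eqref{Mittag_bound}, $|h(\lambda)|=O(\lambda^{-3/2})$. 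A minor secondary point: the discrete-part summands $F_n(x_0)F_n(x)e^{-\lambda_n\tau}f_t(\tau)$ are not non-negative as you claim, but since the sum is finite and each term is $\tau$-integrable, that interchange is still immediate.
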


\begin{proof}
Since the FS diffusion $X(t)$ is independent of the time change $E_t$, using \eqref{FS_d+c} and \eqref{Mittag} together with the Fubini argument, we have:
\begin{align} \label{FFS_tonelli}
P(X_{\alpha}(t) \in B | X_{\alpha}(0)=x_0) &= \int_{0}^{\infty}P(X_{1}(\tau) \in B | X_{1}(0)=x_0)\,f_{t}(\tau)\,d\tau \nonumber \\
&=\int_{0}^{\infty}\int_{B}^{}p_1(x,\tau;x_0)\,f_t(\tau)\,dx\, d\tau  \nonumber \\
&=\int_{B}^{}\int_{0}^{\infty}(p_d(x,\tau;x_0)+p_c(x,\tau;x_0))\,f_t(\tau)\,d\tau \,dx
\end{align}
$$=\int_{B}^{} \left[\int\limits_{0}^{\infty} \sum \limits_{n=0}^{\lfloor\frac{\beta}{4}\rfloor} \mathfrak{fs}(x)\, F_n(x_0) \,F_n(x) \,e^{- \lambda_n \tau}\,f_t(\tau)\,d\tau
+\frac{\mathfrak{fs}(x)}{\pi}\int_{0}^{\infty } \int\limits_{\frac{\theta \beta^2}{8(\beta-2)}}^{\infty}e^{-\lambda \tau}\, f_t(\tau)\, a(\lambda)f_1(x_0,-\lambda)\,f_1(x,-\lambda)\,d\lambda \, d\tau  \right ]dx
$$
\begin{equation} \label{FFS_fubini}
=\int_{B}^{}  \left [ \sum \limits_{n=0}^{\lfloor\frac{\beta}{4}\rfloor} \mathfrak{fs}(x)\, F_n(x_0)\,F_n(x)\,\mathcal{E}_{\alpha}(-\lambda_n t^{\alpha})
+\frac{\mathfrak{fs}(x)}{\pi} \int\limits_{\frac{\theta \beta^2}{8(\beta-2)}}^{\infty} \mathcal{E}_{\alpha}(-\lambda t^{\alpha})\,a(\lambda)\,f_1(x_0,-\lambda)\,f_1(x,-\lambda)\,d\lambda  \right ]dx.
\end{equation}
Change of the order of integration in \eqref{FFS_tonelli} is justified by the Fubini-Tonelli Theorem since functions $p_1$ and $f_t$ are non-negative.
Change of the order of integration in \eqref{FFS_fubini} cannot be justified by the Fubini-Tonelli Theorem as in \eqref{FFS_tonelli} since the integrand
$$g(\lambda,\tau)=e^{-\lambda \tau}\,f_t(\tau)\,a(\lambda)\,f_1(x_0,-\lambda)\,f_1(x,-\lambda)$$
is not necessarily non-negative. In order to use the Fubini Theorem, we need to show that
\begin{equation}\label{FS_Fubini_proof}
\int\limits_{\frac{\theta \beta^2}{8(\beta-2)}}^{\infty}\int\limits_{0}^{\infty}|g(\lambda,\tau)|\, d \tau \, d\lambda < \infty.
\end{equation}
Let $$h(\lambda)=\mathcal{E}_{\alpha}(-\lambda t^{\alpha})\,a(\lambda)\,f_1(x_0,-\lambda)\,f_1(x,-\lambda).$$
Since
\begin{align*}
\int\limits_{\frac{\theta \beta^2}{8(\beta-2)}}^{\infty}\int\limits_{0}^{\infty}|g(\lambda,\tau)|\, d \tau \, d\lambda
&= \int\limits_{\frac{\theta \beta^2}{8(\beta-2)}}^{\infty}\int\limits_{0}^{\infty}e^{-\lambda \tau}\,f_t(\tau)\,|a(\lambda)\,f_1(x_0,-\lambda)\,f_1(x,-\lambda)|\, d \tau \,d\lambda \\
&=\int\limits_{\frac{\theta \beta^2}{8(\beta-2)}}^{\infty}\mathcal{E}_{\alpha}(-\lambda t^{\alpha})\,|a(\lambda)\,f_1(x_0,-\lambda)\,f_1(x,-\lambda)|\, d \lambda \\
&=\int\limits_{\frac{\theta \beta^2}{8(\beta-2)}}^{\infty}|h(\lambda)|\, d \lambda \,
\end{align*}
we need to show that
$$\int\limits_{\frac{\theta \beta^2}{8(\beta-2)}}^{\infty}|h(\lambda)|\, d \lambda <\infty.$$
From \cite[p. 77, Equation (17)]{Erdelyi},
\begin{align*}f_1(x,-\lambda) &=\frac{\Gamma\left(1+\frac{\beta}{4}+ik(\lambda)\right)\,\Gamma\left(\frac{\gamma}{2}\right)}{\Gamma\left(\frac{1}{2}\right)\,\Gamma\left(\frac{\gamma}{2}+\frac{\beta}{4}+ik(\lambda)\right )}\cdot 2^{-\frac{\beta}{2}-1}\cdot\left(1-e^{\xi}\right)^{-\frac{\gamma}{2}+\frac{1}{2}}\cdot\left(1+e^{\xi}\right)^{\frac{\gamma}{2}+\frac{\beta}{2}-\frac{1}{2}}\\ &\cdot \left (ik(\lambda)\right )^{-\frac{1}{2}} \cdot \left ( e^{\xi(\frac{\beta}{4}+ik(\lambda))}+e^{i\pi(\frac{\gamma}{2}-\frac{1}{2})}\cdot e^{\xi(\frac{\beta}{4}-ik(\lambda))} \right )\left ( 1+O(|k(\lambda)|^{-1})\right ), \, \, \lambda \to \infty,
\end{align*}
where $e^{\xi}=1+\frac{2\gamma}{\beta}x+\sqrt{\frac{4\gamma}{\beta}x(1+\frac{\gamma}{\beta}x)}.$\\
It follows that
\begin{equation*}
\begin{split}
|f_1(x,-\lambda)| &\leq \left|\frac{\Gamma\left(1+\frac{\beta}{4}+ik(\lambda)\right)\,\Gamma\left(\frac{\gamma}{2}\right)}{\Gamma\left(\frac{1}{2}\right)\,\Gamma\left(\frac{\gamma}{2}+\frac{\beta}{4}+ik(\lambda)\right)}\right|2^{-\frac{\beta}{2}-1}\cdot|1-e^{\xi}|^{-\frac{\gamma}{2}+\frac{1}{2}}\cdot |1+e^{\xi}|^{\frac{\gamma}{2}+\frac{\beta}{2}-\frac{1}{2}}\\ &\times \left |k(\lambda) \right |^{-\frac{1}{2}}\cdot e^{\xi\frac{\beta}{4}} \left ( 1+|e^{i\pi(\frac{\gamma}{2}-\frac{1}{2})}| \right )\left ( 1+O(|k(\lambda)|^{-1})\right ), \, \, \lambda \to \infty.
\end{split}
\end{equation*}
Now we have
\begin{align*}|h(\lambda)| &\leq \mathcal{E}_{\alpha}(-\lambda t^{\alpha})\left| \frac{B^{\frac{1}{2}}\left( \frac{\gamma}{2}, \frac{\beta}{2} \right) }{\Gamma\left (\frac{1}{2}\right )} \right|^{2}  \left|\frac{\Gamma\left( -\frac{\beta}{4} + ik(\lambda) \right) \, \Gamma\left (1+\frac{\beta}{4}+ik(\lambda)\right )}{\Gamma\left( 1 + 2ik(\lambda) \right) }\right|^2 2^{-\beta-2}\cdot|(1-e^{\xi})(1-e^{\xi_0})|^{-\frac{\gamma}{2}+\frac{1}{2}}\\
&\times \left|(1+e^{\xi})(1+e^{\xi})\right|^{\frac{\gamma}{2}+\frac{\beta}{2}-\frac{1}{2}}  \cdot e^{(\xi+\xi_0)\frac{\beta}{4}} \left ( 1+|e^{i\pi(\frac{\gamma}{2}-\frac{1}{2})}| \right )^2 \left ( 1+O(|k(\lambda)|^{-1})\right ), \, \, \lambda \to \infty.
\end{align*}
Using \eqref{Gamma_asymp} and \eqref{Mittag_bound} we obtain
\begin{align*}|h(\lambda)| &\leq \frac{1}{1+\Gamma(1+\alpha)^{-1}\,\lambda \, t^{\alpha}}\cdot B\left(\frac{\gamma}{2},\frac{\beta}{2}\right)\cdot \left|k(\lambda)\right|^{-1}   2^{-\beta-2}\cdot|(1-e^{\xi})(1-e^{\xi_0})|^{-\frac{\gamma}{2}+\frac{1}{2}}\\
&\times \left|(1+e^{\xi})(1+e^{\xi})\right|^{\frac{\gamma}{2}+\frac{\beta}{2}-\frac{1}{2}}  \cdot e^{(\xi+\xi_0)\frac{\beta}{4}} \left ( 1+|e^{i\pi(\frac{\gamma}{2}-\frac{1}{2})}| \right )^2 \left ( 1+O(|k(\lambda)|^{-1})\right ), \, \, \lambda \to \infty.
\end{align*}
It follows that $$|h(\lambda)|=O(\lambda^{-\frac{3}{2}}), \text{ as } \lambda \to \infty.$$
Finally, according to \cite[p. 68]{Erdelyi}, $\lambda \mapsto f_1(x, \lambda)$ is an entire function for a fixed $x$ and so $\lambda \mapsto |h(\lambda)|$ is also an entire function. This verifies \eqref{FS_Fubini_proof} and completes the proof of the spectral representation of the transition density of the fractional FS diffusion.
\end{proof}

\section{Stationary distributions of the fractional reciprocal gamma and Fisher-Snedecor diffusions}

We now show that as $t\to \infty $, the distribution of $X_\alpha (t)$ approaches the stationary distribution of the non-fractional FS or RG Pearson diffusion, respectively.
\begin{theorem}
Let $X_\alpha (t)$ be reciprocal gamma or Fisher-Snedecor fractional diffusion defined by \eqref{fpddef} and let $p_\alpha (x,t)$ be the density of $X_\alpha (t)$. Assume that $X_\alpha (0)$ has a twice continuously differentiable density $f$ that vanishes at infinity. Then
\begin{equation*}
p_{\alpha}(x,t)\to m (x)\,\, as \,\, t \to \infty,
\end{equation*}
where $m (\cdot )$ is the density of the non-fractional stationary reciprocal gamma or Fisher-Snedecor diffusion, respectively.
\end{theorem}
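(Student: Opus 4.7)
The plan is to start from the representation
$$p_\alpha(x,t) = \int p_\alpha(x,t;y)\,f(y)\,dy,$$
substitute the spectral expansion \eqref{frtranrg} in the RG case (respectively \eqref{frtranf} in the FS case), and exchange the $y$-integration with the finite sum and the $\lambda$-integral by a Fubini argument analogous to the ones already carried out in the proofs of the preceding two theorems. This yields $p_\alpha(x,t)$ as a finite discrete sum whose $n$-th term carries the factor $\mathcal{E}_\alpha(-\lambda_n t^\alpha)\,\int B_n(y)f(y)\,dy$ (respectively with $F_n$), plus a continuous integral over $\lambda\in(\Lambda,\infty)$ whose integrand contains $\mathcal{E}_\alpha(-\lambda t^\alpha)\,\psi(x,-\lambda)\,\widehat{f}(\lambda)$, where $\widehat{f}(\lambda) := \int \psi(y,-\lambda)f(y)\,dy$ in the RG case and the analogous quantity with $f_1$ in the FS case.

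A direct computation from the Rodrigues formulas gives $B_0\equiv 1$ and $F_0\equiv 1$, together with $\lambda_0=0$ and $\mathcal{E}_\alpha(0)=1$, so the $n=0$ summand contributes precisely $\mathfrak{rg}(x)\int f(y)\,dy = \mathfrak{rg}(x)= m(x)$ in the RG case, and $\mathfrak{fs}(x)= m(x)$ in the FS case. Every other discrete summand carries $\mathcal{E}_\alpha(-\lambda_n t^\alpha)$ with $\lambda_n>0$, which tends to $0$ as $t\to\infty$ by the upper estimate in \eqref{Mittag_bound}, so the discrete part converges to $m(x)$. It therefore remains to show that the continuous contribution vanishes in the limit.

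For this I would apply dominated convergence in the variable $\lambda$. Pointwise, $\mathcal{E}_\alpha(-\lambda t^\alpha)\to 0$ for every $\lambda>\Lambda>0$; because $t\mapsto\mathcal{E}_\alpha(-\lambda t^\alpha)$ is non-increasing in $t$, the integrand at any fixed $t_0>0$ provides a uniform envelope on $\{t\ge t_0\}$. The estimate $|h(\lambda)|=O(\lambda^{-3/2})$ proved earlier controls the $\mathcal{E}_\alpha\,b\,\psi\,\psi$ structure; what remains is to control the additional factor $\widehat{f}(\lambda)$ uniformly in $\lambda$. This is the step I expect to be the main obstacle, and it is precisely where the hypotheses on $f$ enter: since $f\in C^2$ vanishes at infinity and satisfies the natural boundary conditions, an integration by parts against the eigenfunction identity $\mathcal{G}\psi(\cdot,-\lambda)=-\lambda\,\psi(\cdot,-\lambda)$, together with the adjointness of $\mathcal{G}$ and the Fokker-Planck operator $\mathcal{L}$, yields $\widehat{f}(\lambda) = -\lambda^{-1}\int \psi(y,-\lambda)(\mathcal{L}f)(y)\,dy$, with the boundary terms vanishing because of the decay of $f$.

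The extra $\lambda^{-1}$ factor is enough to make the envelope integrable, and the dominated-convergence argument then closes. The same scheme works for the Fisher-Snedecor case with $f_1$ in place of $\psi$ and $\mathcal{L}$ the associated Fokker-Planck operator. Combining the three contributions, the discrete $n\ge 1$ terms and the continuous integral vanish as $t\to\infty$, and the $n=0$ term identifies the limit of $p_\alpha(x,t)$ as the stationary density $m(x)$, proving the theorem.
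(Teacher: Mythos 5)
Your overall skeleton (isolate the $n=0$ term as $m(x)$, kill the $n\ge 1$ discrete terms via $\mathcal{E}_\alpha(-\lambda_n t^\alpha)\to 0$, show the continuous spectral contribution vanishes) matches the paper, but your route through the continuous part is genuinely different and considerably heavier than what the paper does. The paper never forms the transform $\widehat{f}(\lambda)=\int\psi(y,-\lambda)f(y)\,dy$: it first proves the pointwise statement $p_\alpha(x,t;y)\to m(x)$ for \emph{fixed} $x$ and $y$, and only afterwards integrates against $f$. For fixed $y$ the continuous part is handled by the elementary estimate $\mathcal{E}_\alpha(-\lambda t^\alpha)\le \bigl(1+\Gamma(1+\alpha)^{-1}\lambda t^\alpha\bigr)^{-1}\le \Gamma(1+\alpha)\,\lambda^{-1}t^{-\alpha}$ from \eqref{Mittag_bound}, which pulls out an explicit factor $t^{-\alpha}$ in front of a $t$-independent convergent integral (convergent by the $|h(\lambda)|=O(\lambda^{-3/2})$ bound already established). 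No dominated convergence in $\lambda$, no integration by parts, and no hypotheses on $f$ are needed at this stage; the assumptions on $f$ only enter at the very last step when integrating the pointwise limit against $f(y)\,dy$. Your approach buys nothing here and creates work the paper avoids.

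The genuine gap is in the two steps your reordering forces on you. First, the interchange of the $y$-integral with the $\lambda$-integral is not ``analogous to the ones already carried out'': those Fubini arguments were in $(\lambda,\tau)$ for fixed $x,x_0$, whereas yours requires $\int\!\!\int |\psi(y,-\lambda)|\,f(y)\,|b(\lambda)\psi(x,-\lambda)|\,\mathcal{E}_\alpha(-\lambda t^\alpha)\,dy\,d\lambda<\infty$, i.e.\ exactly the uniform control of $\widehat{f}(\lambda)$ that you defer to later; the argument is circular as written. Second, the integration by parts $\widehat{f}(\lambda)=-\lambda^{-1}\int\psi(y,-\lambda)(\mathcal{L}f)(y)\,dy$ is not justified by the stated hypotheses: the state space is $(0,\infty)$ and the theorem only assumes $f$ is $C^2$ and vanishes at infinity, so the boundary terms at $y\to 0^+$ (which involve $\sigma^2(y)f(y)$, its derivative, and the generally unbounded $\psi(y,-\lambda)$ near the boundary) have no reason to vanish, and you would additionally need $\int|\psi(y,-\lambda)||(\mathcal{L}f)(y)|\,dy$ bounded uniformly in $\lambda$, which is not established. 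Both difficulties evaporate if you follow the paper and prove the fixed-$y$ pointwise limit first.
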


\begin{proof}
Using the definition of the transition density $p_\alpha (x,t;y)$  \eqref{trans}, we have
\[
p_{\alpha}(x,t)=\int _0^\infty p_\alpha (x,t;y) f(y)dy
\]
and therefore it suffices to prove that
\begin{equation*}
p_{\alpha}(x,t;y)\to m (x) \quad \mathrm{as} \quad t \to \infty
\end{equation*}
for fixed $x$ and $y$. This together with the fact that $f(y)$ and $p_{\alpha}(x,t;y)$ are density functions then yields
\[
\int _0^\infty p_\alpha (x,t;y) f(y)dy \to m (x) \int _0^\infty  f(y)dy=m (x) \,\, \mathrm{ as } \,\, t \to \infty.
\]
We treat the reciprocal gamma and Fisher-Snedecor cases separately.

\bigskip \bigskip

\textbf{Reciprocal gamma diffusion}

\bigskip

Since $\lambda_0=0$, it follows that
\begin{align*}
p_{\alpha}(x,t;y)&= \sum \limits_{n=0}^{\lfloor\frac{\beta}{2}\rfloor} \mathfrak{rg}(x)\, B_n(x)\,\,B_n(y)\,\mathcal{E}_{\alpha}(-\lambda_n t^{\alpha})
+\frac{\mathfrak{rg}(x)}{4\pi} \int\limits_{\frac{\theta \beta^2}{4(\beta-1)}}^{\infty} \mathcal{E}_{\alpha}(-\lambda t^{\alpha})\,b(\lambda)\,\psi(x,-\lambda)\,\psi(y,-\lambda)\,d\lambda \\
&= \mathfrak{rg}(x)+\sum \limits_{n=1}^{\lfloor\frac{\beta}{2}\rfloor} \mathfrak{rg}(x) \,B_n(x)\,B_n(y)\,\mathcal{E}_{\alpha}(-\lambda_n t^{\alpha})
+\frac{\mathfrak{rg}(x)}{4\pi} \int\limits_{\frac{\theta \beta^2}{4(\beta-1)}}^{\infty} \mathcal{E}_{\alpha}(-\lambda t^{\alpha})\,b(\lambda)\,\psi(x,-\lambda)\,\psi(y,-\lambda)\,d\lambda. \\
\end{align*}
For a constant $c$ such that
$$c \geq \frac{4(\beta-1)}{\theta \beta^2 t^{\alpha}}+\Gamma(1-\alpha)\geq \frac{1}{\lambda t^{\alpha}}+\Gamma(1-\alpha)>0$$
from \eqref{Mittag_bound} we obtain
$$\frac{1}{c \lambda t^{\alpha}}\leq \frac{1}{1+\Gamma(1-\alpha)\lambda t^{\alpha}}\leq\mathcal{E}_{\alpha}(-\lambda t^{\alpha}) \leq\frac{1}{1+\Gamma(1+\alpha)^{-1}\lambda t^{\alpha}}.$$
Now it follows that
$$\frac{1}{t^{\alpha}}\int\limits_{\frac{\theta \beta^2}{4(\beta-1)}}^{\infty} \frac{1}{c \lambda}\,b(\lambda)\,\psi(x,-\lambda)\,\psi(y,-\lambda)\,d\lambda \leq\int\limits_{\frac{\theta \beta^2}{4(\beta-1)}}^{\infty} \mathcal{E}_{\alpha}(-\lambda t^{\alpha})\,b(\lambda)\,\psi(x,-\lambda)\,\psi(y,-\lambda)\,d\lambda,$$
$$\int\limits_{\frac{\theta \beta^2}{4(\beta-1)}}^{\infty} \mathcal{E}_{\alpha}(-\lambda t^{\alpha})\,b(\lambda)\,\psi(x,-\lambda)\,\psi(y,-\lambda)\,d\lambda \leq \frac{1}{t^{\alpha}}\int\limits_{\frac{\theta \beta^2}{4(\beta-1)}}^{\infty} \frac{\Gamma(1+\alpha)}{\lambda}\,b(\lambda)\,\psi(x,-\lambda)\,\psi(y,-\lambda)\,d\lambda .$$
Letting $t \to \infty$ yields
$$\int\limits_{\frac{\theta \beta^2}{4(\beta-1)}}^{\infty} \mathcal{E}_{\alpha}(-\lambda t^{\alpha})\,b(\lambda)\,\psi(x,-\lambda)\,\psi(y,-\lambda)\,d\lambda \to 0\,, \,\, t \to \infty$$
and
$$\sum \limits_{n=1}^{\lfloor\frac{\beta}{2}\rfloor} \mathfrak{rg}(x) \,B_n(x)\,B_n(y)\,\mathcal{E}_{\alpha}(-\lambda_n t^{\alpha}) \to 0, \,\, t \to \infty .$$
Therefore
\[
p_{\alpha}(x,t;y) \to \mathfrak{rg}(x), \,\, \,\, x>0 \text{ as } t \to \infty.
\]

\bigskip \bigskip

\textbf{Fisher-Snedecor diffusion}

\bigskip

Since $\lambda_0=0$, it follows that
\begin{align*}
p_{\alpha}(x,t;y)&= \sum \limits_{n=0}^{\lfloor\frac{\beta}{4}\rfloor} \mathfrak{fs}(x)\, F_n(y)\,F_n(x)\,\mathcal{E}_{\alpha}(-\lambda_n t^{\alpha})
+\frac{\mathfrak{fs}(x)}{\pi } \int\limits_{\frac{\theta \beta^2}{8(\beta-2)}}^{\infty} \mathcal{E}_{\alpha}(-\lambda t^{\alpha})\,a(\lambda)\,f_1(y,-\lambda)\,f_1(x,-\lambda)\,d\lambda \\
&= \mathfrak{fs}(x)+\sum \limits_{n=1}^{\lfloor\frac{\beta}{4}\rfloor} \mathfrak{fs}(x)\, F_n(y)\,F_n(x)\,\mathcal{E}_{\alpha}(-\lambda_n t^{\alpha})
+\frac{\mathfrak{fs}(x)}{\pi } \int\limits_{\frac{\theta \beta^2}{8(\beta-2)}}^{\infty} \mathcal{E}_{\alpha}(-\lambda t^{\alpha})\,a(\lambda)\,f_1(y,-\lambda)\,f_1(x,-\lambda)\,d\lambda.
\end{align*}
Let $c$ be a constant such that
$$c \geq \frac{8(\beta-2)}{\theta \beta^2 t^{\alpha}}+\Gamma(1-\alpha)\geq \frac{1}{\lambda t^{\alpha}}+\Gamma(1-\alpha)>0.$$
From \eqref{Mittag_bound} we obtain
$$\frac{1}{c \lambda t^{\alpha}}\leq \frac{1}{1+\Gamma(1-\alpha)\lambda t^{\alpha}}\leq\mathcal{E}_{\alpha}(-\lambda t^{\alpha}) \leq\frac{1}{1+\Gamma(1+\alpha)^{-1}\lambda t^{\alpha}}.$$
Therefore,
$$\frac{1}{ t^{\alpha}}\int\limits_{\frac{\theta \beta^2}{8(\beta-2)}}^{\infty} \frac{1}{c\lambda  }\,a(\lambda)\,f_1(y,-\lambda)\,f_1(x,-\lambda)\,d\lambda \leq \int\limits_{\frac{\theta \beta^2}{8(\beta-2)}}^{\infty} \mathcal{E}_{\alpha}(-\lambda t^{\alpha})\,a(\lambda)\,f_1(y,-\lambda)\,f_1(x,-\lambda)\,d\lambda,$$
$$\int\limits_{\frac{\theta \beta^2}{8(\beta-2)}}^{\infty} \mathcal{E}_{\alpha}(-\lambda t^{\alpha})\,a(\lambda)\,f_1(y,-\lambda)\,f_1(x,-\lambda)\,d\lambda \leq\frac{1}{ t^{\alpha}}\int\limits_{\frac{\theta \beta^2}{8(\beta-2)}}^{\infty} \frac{\Gamma(1+\alpha)}{\lambda  }\,a(\lambda)\,f_1(y,-\lambda)\,f_1(x,-\lambda)\,d\lambda .$$
Letting $t \to \infty$ yields
$$\int\limits_{\frac{\theta \beta^2}{8(\beta-2)}}^{\infty} \mathcal{E}_{\alpha}(-\lambda t^{\alpha})\,a(\lambda)\,f_1(y,-\lambda)\,f_1(x,-\lambda)\,d\lambda \,\, \to \,\, 0, \text{ as } t \to \infty$$
and
$$\sum \limits_{n=1}^{\lfloor\frac{\beta}{4}\rfloor} \mathfrak{fs}(x)\, F_n(y)\,F_n(x)\,\mathcal{E}_{\alpha}(-\lambda_n t^{\alpha}) \to 0, \text{ as } t \to \infty .$$
Therefore
\[
p_{\alpha}(x,t;y) \to \mathfrak{fs}(x), \,\, \,\, x>0 \text{ as } t \to \infty.
\]
\end{proof}

\section{Correlation structure of fractional Pearson diffusions}

Assume that $X(t)$ is a stationary Pearson diffusion and that its parameters are such that the stationary distribution has finite second moment. Then the correlation function  of $X(t)$ is given by
\begin{equation}\label{korelacijaPD}
\Corr{\left[ X(t),\,X(s) \right] } = \exp(-\theta|t-s|),
\end{equation}
where $\theta$ is the autocorrelation parameter. Since the autocorrelation function \eqref{korelacijaPD} falls off exponentially, Pearson diffusions exhibit short-range dependence.

We say that fractional Pearson diffusion $X_{\alpha} (t)$ defined by \eqref{fpddef} is in the steady state if it starts from its invariant distribution with the density $m$. Then the autocorrelation function of $X_{\alpha}(t) = X(E_t)$ is given by
\begin{equation} \label{korelacija_FPD}
\Corr{\left[ X_{\alpha}(t),\,X_{\alpha}(s)\right]}=\mathcal{E}_{\alpha}(-\theta t^{\alpha})+\frac{\theta \alpha t^{\alpha}}{\Gamma(1+\alpha)}\int\limits_{0}^{s/t}\frac{\mathcal{E}_{\alpha}(-\theta t^{\alpha} (1-z)^{\alpha})}{z^{1-\alpha}}dz
\end{equation}
for $t \ge s >0.$

The proof of this fact for non-heavy-tailed fractional Pearson diffusions is given in \cite{LeonenkoMeerschaertSikorskii_CSFPD_2013}. The proof does not depend on the type of invariant Pearson distribution, and therefore the same proof can be repeated for all three heavy-tailed fractional Pearson diffusions, provided that the tails are not too heavy so that the second moment of the corresponding heavy-tailed Pearson distribution exists.

The autocorrelation function \eqref{korelacija_FPD} falls off like power law with exponent $\alpha \in (0,1)$, i.e. for any fixed $s>0$
\[
\Corr [X_{\alpha}(t),X_{\alpha}(s)] = \frac{1}{t^{\alpha}\Gamma(1-\alpha)}\left ( \frac{1}{\theta}+\frac{s^{\alpha}}{\Gamma{(1+\alpha)}} \right )(1+o(1)) \text{ as } t \to \infty.
 \]
Therefore, unlike non-fractional Pearson diffusions, their fractional analogues are long-range dependent processes in the sense that their correlation functions decay slowly.

\begin{comment}
\textbf{Student diffusion}\\

Since $\lambda_0=0$ we write

\begin{align} \label{frtrans}
p_{\alpha}(x,t;y)&= \sum \limits_{n=0}^{\lfloor\frac{\nu}{2}\rfloor} \mathfrak{st_{\nu}}(x) R_n(y)R_n(x)\mathcal{E}_{\alpha}(-\lambda_n t^{\alpha})+   \int\limits_{\frac{\theta \nu^2}{4(\nu-1)}}^{^\infty}\mathcal{E}_{\alpha}(-\lambda t^{\alpha})\mathfrak{st_{\nu}}(x)\frac{\Re f_3(y,-\lambda) \Re f_3(x,-\lambda)}{\|f_3(x,-\lambda)\|^2}d\lambda \nonumber \\
&= \mathfrak{st_{\nu}}(x)+\sum \limits_{n=1}^{\lfloor\frac{\nu}{2}\rfloor} \mathfrak{st_{\nu}}(x) R_n(y)R_n(x)\mathcal{E}_{\alpha}(-\lambda_n t^{\alpha})+
 \int\limits_{\frac{\theta \nu^2}{4(\nu-1)}}^{^\infty}\mathcal{E}_{\alpha}(-\lambda t^{\alpha})\mathfrak{st_{\nu}}(x)\frac{\Re f_3(y,-\lambda) \Re f_3(x,-\lambda)}{\|f_3(x,-\lambda)\|^2} d\lambda. \nonumber
\end{align}
Now using \eqref{Mittag_bound}, \eqref{Mittag_limit} and Dominance convergence theorem we see that
\[
p_{\alpha}(x,t;y) \to \mathfrak{st_{\nu}}(x) \,\, as \,\, t \to \infty.
\]

\end{comment}

\section{Strong solutions of time-fractional backward Kolmogorov equation}

To establish the main result of this section, we need the following Lemma.

\begin{lemma}\label{continuity}
For the reciprocal gamma and Fisher-Snedecor diffusions, the family of operators
\begin{equation*}
T_{t} g(y)=E[g(X(t))\,|\,X(0)=y],\quad t\ge 0
\end{equation*}
forms a strongly continuous bounded ($C_0$) semigroup on the space of bounded continuous functions $g$ on $[0, \infty)$ vanishing at infinity.

\end{lemma}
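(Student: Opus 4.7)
The plan is to verify the four defining properties of a $C_0$-contraction semigroup on $C_0([0,\infty))$: (i) the algebraic semigroup law $T_{t+s} = T_t T_s$ with $T_0 = I$, (ii) the Feller property $T_t(C_0) \subseteq C_0$, (iii) contractivity $\|T_t\| \le 1$, and (iv) strong continuity $\|T_t g - g\|_\infty \to 0$ as $t \to 0^+$. Properties (i) and (iii) come for free: (i) from the time-homogeneous Markov property of $X$, which is guaranteed by uniqueness of the strong solution of \eqref{SDE} noted after \eqref{parametrize}, and (iii) from $|T_t g(y)| \le E[|g(X(t))| \mid X(0)=y] \le \|g\|_\infty$.

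For the Feller property (ii), continuity of $y \mapsto T_t g(y)$ will follow from the continuous dependence of solutions of \eqref{SDE} on the initial condition (which gives $X(t)$ starting from $y$ converges in probability to $X(t)$ starting from $y_0$ as $y \to y_0$) combined with bounded convergence, since $g$ is bounded and continuous. To show $T_t g(y) \to 0$ as $y \to \infty$, I would use that $g$ vanishes there: choose $R$ with $|g(x)| < \varepsilon$ for $x \ge R$ and estimate
\[
|T_t g(y)| \le \varepsilon + \|g\|_\infty \, P(X(t) \le R \mid X(0) = y).
\]
The latter probability is controlled via Chebyshev's inequality and the standard second-moment bound $E[(X(t) - y)^2 \mid X(0) = y] \le C(1 + y^2)\,t$, which holds for both SDEs because the drift is linear and the squared diffusion is at most quadratic in \eqref{SDE}. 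Since $y - R \ge y/2$ for $y \ge 2R$, this yields $P(\cdots) \le 4C(1+y^2)t/y^2 \le 8Ct$ for $y$ large, hence $|T_t g(y)| \to 0$; the behavior as $y \to 0^+$ is handled analogously using that $0$ is a natural (inaccessible) boundary for the RG and FS diffusions together with the same moment estimate.

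For strong continuity (iv), fix $g \in C_0$ and $\varepsilon > 0$; choose $\delta > 0$ from the uniform continuity of $g$ on $[0,\infty)$ and $R > 0$ as above. The basic inequality is
\[
|T_t g(y) - g(y)| \le \varepsilon + 2\|g\|_\infty \, P(|X(t) - y| \ge \delta \mid X(0) = y).
\]
On the compact interval $y \in [0, 2R]$, Chebyshev combined with the moment bound gives $P(\cdots) \le C(1+4R^2)\,t/\delta^2 = O(t)$ uniformly in $y$. For $y > 2R$, I would instead bound $|T_t g(y) - g(y)| \le |T_t g(y)| + |g(y)| \le 2\varepsilon + \|g\|_\infty\, P(X(t) \le R \mid X(0) = y)$, and the second-moment argument of the previous paragraph controls the probability by $O(t)$ uniformly in $y > 2R$. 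Combining the two regimes gives $\|T_t g - g\|_\infty \le 2\varepsilon + O(t)$; letting first $t \to 0^+$ and then $\varepsilon \to 0$ finishes the argument.

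The principal obstacle is the unboundedness of the drift and diffusion coefficients of the RG and FS SDEs, which precludes a single uniform-in-$y$ tightness estimate on $X(t) - y$ as $t \to 0^+$. The key idea for circumventing this is to exploit the $C_0$ structure itself: the vanishing of $g$ at infinity lets us absorb all large-$y$ contributions into the $\varepsilon$ term, reducing the strong-continuity estimate to a compact set on which the standard SDE moment bound is already uniform.
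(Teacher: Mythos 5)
Your proposal is correct, but it reaches strong continuity by a genuinely different route than the paper. The paper's proof establishes only \emph{pointwise} continuity of $t\mapsto T_tg(y)$: it writes $T_tg(y)-g(y)=\int p_1(x,t;y)(g(x)-g(y))\,dx$, splits the integral over $\{|x-y|\le\epsilon\}$ and $\{|x-y|>\epsilon\}$, and for the far region invokes the classical fact (Karlin--Taylor) that $\int_{|x-y|>\epsilon}p_1(x,t;y)\,dx\to 0$ as $t\to 0$ for each fixed $y$; it then upgrades pointwise to strong continuity by citing an abstract lemma for uniformly bounded semigroups (and cites a separate theorem for the uniform boundedness itself). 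You instead prove the uniform estimate $\|T_tg-g\|_\infty\to 0$ directly and quantitatively: on a compact $[0,2R]$ you use Chebyshev together with the second-moment bound $E[(X(t)-y)^2\mid X(0)=y]\le C(1+y^2)t$ (valid for small $t$ because both SDEs have linear-growth coefficients), and for $y>2R$ you absorb everything into the vanishing of $g$ at infinity plus the same Chebyshev bound, which is where the unboundedness of the coefficients is neutralized. What your approach buys is self-containedness --- contractivity is immediate and no pointwise-to-strong upgrade lemma is needed --- and you additionally verify the Feller property $T_t(C_0)\subseteq C_0$ explicitly, which the paper leaves implicit; what the paper's approach buys is that it sidesteps any moment estimate and works verbatim for any diffusion with the Karlin--Taylor local concentration property. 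One small caveat on your Feller step: continuity of $y\mapsto T_tg(y)$ via continuous dependence on the initial condition is slightly delicate for the Fisher--Snedecor diffusion, whose diffusion coefficient is only H\"older-$\tfrac12$ near $x=0$, so a Yamada--Watanabe-type argument rather than the plain Lipschitz one is needed there; this does not affect the strong-continuity estimates, which are the heart of the lemma.
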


\begin{proof}
The proof of this Lemma is the same as for the non-heavy-tailed diffusions considered in \cite{LeonenkoMeerschaertSikorskii_FPD_2013}. We provide it here for completeness.
The semigroup property follows from the Chapman-Kolmogorov equation for the reciprocal gamma and Fisher-Snedecor diffusions, and uniform boundedness of the semigroup on the above Banach space of continuous functions with the supremum norm  follows from \cite[Theorem 3.4]{F_1975}. Therefore, the family of operators $\{T(t),\,t\ge 0\}$ forms a uniformly bounded semigroup on the respective Banach space of continuous functions, with the supremum norm. Next, we show the pointwise continuity of the semigroup.  For any fixed $y\in (l,L)$
\begin{equation*}\begin{split}
T(t)g(y)-g(y)&=\int _l^L p_1(x,t;y)(g(x)-g(y))dx\\
&=\int _{|x-y| \le \epsilon \cap (l,L)}p_1(x,t;y)(g(x)-g(y))dx\\
&\quad\quad\quad + \int _{|x-y|> \epsilon \cap (l,L)}p_1(x,t;y)(g(x)-g(y))dx\\
&\le \sup _{|x-y| \le \epsilon \cap (l,L)}|g(x)-g(y)|\int _{|x-y| \le \epsilon \cap (l,L)}p_1(x,t;y)dx\\
&\quad\quad\quad +C\int _{|x-y|> \epsilon \cap (l,L)}p_1(x,t;y)dx
\end{split}\end{equation*}
since the function $g$ is bounded. Since $\int _{|x-y|> \epsilon \cap (l,L)}p_1(x,t;y)dx\to 0$ as $t\to 0$ for any $\epsilon >0$ (see  \cite{KarlinTaylor2_1981}, p. 158), the second term in the above expression tends to zero as $t\to 0$. The first term is bounded by $\sup _{|x-y| \le \epsilon \cap (l,L)}|g(x)-g(y)|$, which tends to zero as $\epsilon \to 0$ because of the continuity of $g$. Pointwise continuity then implies strong continuity in view of  \cite[Lemma 6.7]{RW_1994}.

\end{proof}

The next result gives a strong solution to the fractional Cauchy problem associated with the time-fractional backward Kolmogorov equation.

\begin{theorem}
For any $g$ from the domain of the generator $\mathcal G$ specified in \eqref{domain}, a strong solution to the fractional Cauchy problem \eqref{FbackwardCauchy} is given by
\begin{equation}\label{fbackwardsolution}
q(t;y)=\int _l^L p_\alpha (x,t;y)g(x)dx,
\end{equation}
where the transition density $p_\alpha $ is given by equation \eqref{frtranrg} in the reciprocal gamma case and by equation \eqref{frtranf} in the Fisher-Snedecor case.
\end{theorem}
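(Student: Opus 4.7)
The plan is to reduce the statement to the subordination principle of Baeumer and Meerschaert (2001), which states that if $\{T_t\}_{t\ge 0}$ is a strongly continuous semigroup on a Banach space with generator $\mathcal{G}$, and $g$ belongs to the domain of $\mathcal{G}$, then the function
\begin{equation*}
q(t;y)=\int_0^\infty T_\tau g(y)\,f_t(\tau)\,d\tau
\end{equation*}
is a strong solution of the fractional Cauchy problem \eqref{FbackwardCauchy} with initial datum $g$. Here $f_t$ is the density of the inverse stable subordinator $E_t$ of index $\alpha$. Lemma \ref{continuity} already supplies the first hypothesis, namely the $C_0$ semigroup property of $T_t$ on the Banach space of continuous functions on $[0,\infty)$ vanishing at infinity, and the hypothesis $g\in\mathrm{dom}(\mathcal{G})$ is assumed in the statement.

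The first step is to identify the right-hand side of \eqref{fbackwardsolution} with the Baeumer--Meerschaert subordination integral. By independence of the Pearson diffusion $X(t)$ and the time change $E_t$, together with the conditioning argument used in \eqref{FRG_tonelli}, we have
\begin{equation*}
\int_l^L p_\alpha(x,t;y)\,g(x)\,dx
= \int_0^\infty\!\!\int_l^L p_1(x,\tau;y)\,g(x)\,dx\,f_t(\tau)\,d\tau
= \int_0^\infty T_\tau g(y)\,f_t(\tau)\,d\tau,
\end{equation*}
where the switch of integrals is justified by Fubini-Tonelli applied to the non-negative density $p_1$ when $g$ is first split into positive and negative parts and then by the boundedness of $g$ coming from the domain assumption (since $g$ is in $L^2((l,L),\mathfrak m)\cap C^2$ and $\mathcal G g$ is $L^2$, with boundary conditions implying decay at the endpoints).

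Once the two representations are identified, the subordination theorem of Baeumer--Meerschaert directly implies that $q(t;y)$ is continuously differentiable in $t$ in the Caputo sense, that $q(\cdot;y)\in\mathrm{dom}(\mathcal{G})$ for each $t\ge 0$, and that $\partial^\alpha_t q=\mathcal G q$ with $q(0;y)=g(y)$. The initial condition $q(0;y)=g(y)$ follows from $E_0=0$ almost surely, so that $f_0$ is the Dirac measure at $0$ and the integral collapses to $T_0 g(y)=g(y)$.

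The main obstacle I anticipate is the bookkeeping required to confirm that the two representations coincide and that the subordination theorem applies on the chosen function space. One must check that the representation \eqref{frtranrg} (resp. \eqref{frtranf}) actually agrees pointwise with $\int_0^\infty p_1(x,\tau;y)f_t(\tau)\,d\tau$ for every admissible $g$, and this is exactly the Fubini argument already carried out in the proofs of the spectral representation theorems; thus it transfers verbatim by replacing the indicator $\mathbf 1_B(x)$ in \eqref{FRG_tonelli} by the integrable function $g(x)$. With that identification in hand, the conclusion is obtained by invoking the Baeumer--Meerschaert subordination result, exactly as in \cite{LeonenkoMeerschaertSikorskii_FPD_2013}.
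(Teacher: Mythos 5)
Your proposal is correct and follows essentially the same route as the paper: establish the $C_0$-semigroup property via Lemma \ref{continuity}, invoke Theorem 3.1 of Baeumer and Meerschaert (2001) to get that the subordination integral $\int_0^\infty T_\tau g(y)\,f_t(\tau)\,d\tau$ solves the fractional Cauchy problem, and then identify this integral with $\int_l^L p_\alpha(x,t;y)\,g(x)\,dx$ using the independence of $X$ and $E_t$. The paper phrases the identification through conditional expectations rather than an explicit Fubini computation, but the content is the same.
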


\begin{proof}
The proof of this Theorem consists of several steps. First, Lemma \ref{continuity} and \cite[Proposition 3.1.9]{Arendt} show that $q(t; y)=T(t)g(y)$ solves the non-fractional Cauchy problem \eqref{backwardCauchy}. Second, strong continuity of the semigroup in the Banach space of continuous functions with the supremum norm and Theorem 3.1 in \cite{BaeumerMeerschaert_2001} show that
\begin{equation}\label{SGFCPSOL}
S_t g (y) = \int\limits_{0}^{\infty}T_u g(y) \,f_t(u)\,du,
\end{equation}
where $f_t$ is the density of the inverse stable subordinator $E_t$ given by \eqref{etdensity}, solves the fractional Cauchy problem \eqref{backwardCauchy} for any $g$ from the domain of the generator $\mathcal G$.

Third, since
\begin{eqnarray*}
S_t g(y) &=& \int\limits_{0}^{\infty}T_{u}\, g(y)\, f_t(u)\,du \\
         &=& \int\limits_{0}^{\infty}E[g(X(t))\,|\,X(0)=y]\, f_t(u)\,du\\
         &=& E[g(X(E_t))\,|\,X(E_0)=y]\\
         &=& E[g(X_{\alpha}(t))\,|\,X_{\alpha}(0)=y]
\end{eqnarray*}
where $E_0=0$ almost surely and
\[
E[g(X_{\alpha}(t))\,|\,X_{\alpha}(0)=y]=\int p_{\alpha}(x,t;y)\,g(x)\,dx,
\]
a strong solution to \eqref{FbackwardCauchy} is given by \eqref{fbackwardsolution}.

\end{proof}

\begin{remark}
The explicit expressions for strong solutions of the fractional Cauchy problem associated with the generators of the reciprocal gamma and Fisher-Snedecor diffusions are:
\begin{equation*} \label{FPD_tranrg}
\begin{split}
u_{\mathfrak{rg}}(t;y)&=  \sum \limits_{n=0}^{\lfloor\frac{\beta}{2}\rfloor} \,B_n(y)\,\mathcal{E}_{\alpha}(-\lambda_n t^{\alpha})\int\limits_{0}^{\infty} B_n(x)\,\mathfrak{rg}(x)\,g(x)\,dx \\
& +  \int\limits_{0}^{\infty}\frac{\mathfrak{rg}(x)\,g(x)}{4\pi} \int\limits_{\frac{\theta \beta^2}{4(\beta-1)}}^{\infty} \mathcal{E}_{\alpha}(-\lambda t^{\alpha})\,b(\lambda)\,\psi(x,-\lambda)\,\psi(y,-\lambda)\,d\lambda\, dx
\end{split}
\end{equation*}
and
\begin{equation*} \label{FPD_tranf}
\begin{split}
u_{\mathfrak{fs}}(t;y)&=  \sum \limits_{n=0}^{\lfloor\frac{\beta}{4}\rfloor}  F_n(y)\mathcal{E}_{\alpha}(-\lambda_n t^{\alpha})\int\limits_{0}^{\infty}F_n(x)\,\mathfrak{fs}(x)\,g(x)\,dx\\
& + \frac{1}{\pi} \int\limits_{0}^{\infty}\mathfrak{fs}(x)\,g(x)\int\limits_{\frac{\theta \beta^2}{8(\beta-2)}}^{\infty} \mathcal{E}_{\alpha}(-\lambda t^{\alpha})a(\lambda)f_1(y,-\lambda)f_1(x,-\lambda)d\lambda\,dx.
\end{split}
\end{equation*}

\end{remark}

The explicit expressions for strong solutions of the Cauchy problem for the fractional Fokker-Planck equations were obtained in \cite{LeonenkoMeerschaertSikorskii_FPD_2013} for all three non-heavy-tailed fractional Pearson diffusions using their spectral properties. Since the structure of the spectrum for the reciprocal gamma and Fisher-Snedecor diffusions is much more complex than in the non-heavy-tailed cases, strong solutions of Cauchy problems associated with the fractional Fokker-Planck equation are not presented here. Below we state the result on the $L^2$ solutions. Proving that these are also strong solutions that hold pointwise remains an open problem at this time.

\begin{theorem}
The fractional Cauchy problem
\begin{equation}\label{forward}
\frac {\partial ^\alpha q(x,t)}{\partial t^\alpha }=-\frac{\partial}{\partial x}\left(\mu(x)q(x,t) \right)+\frac{1}{2}\frac{\partial^2}{\partial x^2}\left(
        \sigma^2(x) q(x,t) \right),\,\,
				q(x,0)=f(x),
 \end{equation}
where $f$ is twice continuously differentiable function that vanishes at zero and has a compact support, is solved by
\begin{equation}\label{sol}
q(x,t)=\int _0^\infty p_\alpha (x,t;y)f(y)dy.
\end{equation}
The transition density $p_\alpha $ is given by equation \eqref{frtranrg} in the reciprocal gamma case and by equation \eqref{frtranf} in the Fisher-Snedecor case, and the solution is
 in the following sense: for every $t>0$, $q(x,t)$ given by \eqref{sol} satisfies \eqref{forward}, and the equality holds
in the space of functions
$\{ q (\cdot , t) \in L^{2}\left((0, \infty )\right)\}$.
\end{theorem}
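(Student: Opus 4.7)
The plan is to exploit the spectral representations \eqref{frtranrg} and \eqref{frtranf} of the transition densities together with the key identity
\[
\mathcal{L}\bigl(\mathfrak{m}(x)\,\varphi(x)\bigr)=\mathfrak{m}(x)\,\mathcal{G}\varphi(x),
\]
which follows from a direct computation using $(\sigma^2\mathfrak{m})'=2\mu\mathfrak{m}$ (a consequence of the definitions of the speed and scale densities $\mathfrak{m}$ and $\mathfrak{s}$) together with the stationarity $\mathcal{L}\mathfrak{m}=0$. In particular, each eigenfunction $\varphi_\lambda$ of $(-\mathcal{G})$ with eigenvalue $\lambda$ (be it a normalized Bessel/FS polynomial or one of the continuous-spectrum functions $\psi$ or $f_1$) yields an eigenfunction $\mathfrak{m}\varphi_\lambda$ of $(-\mathcal{L})$ with the same eigenvalue.

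First I would substitute \eqref{frtranrg} (respectively \eqref{frtranf}) into \eqref{sol} and apply Fubini to rearrange
\[
q(x,t)=\mathfrak{m}(x)\sum_{n=0}^{N}c_n\,\varphi_n(x)\,\mathcal{E}_{\alpha}(-\lambda_n t^\alpha)+\mathfrak{m}(x)\int_{\Lambda}^{\infty}C(\lambda)\,\varphi_\lambda(x)\,\mathcal{E}_{\alpha}(-\lambda t^\alpha)\,d\lambda,
\]
where $N=\lfloor\beta/2\rfloor$ or $\lfloor\beta/4\rfloor$, $\Lambda$ is the relevant spectral cut-off, $c_n=\int_0^\infty\varphi_n(y)f(y)\,dy$, and $C(\lambda)$ absorbs the factor $b(\lambda)$ or $a(\lambda)$ multiplied by the continuous-spectrum coefficient $\int_0^\infty\varphi_\lambda(y)f(y)\,dy$. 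Acting termwise, the Caputo derivative yields $\partial_t^\alpha\mathcal{E}_{\alpha}(-\lambda t^\alpha)=-\lambda\,\mathcal{E}_{\alpha}(-\lambda t^\alpha)$ while $\mathcal{L}$ yields $\mathcal{L}(\mathfrak{m}\varphi_\lambda)=-\lambda\,\mathfrak{m}\varphi_\lambda$, so the two actions coincide and \eqref{forward} holds formally. To legitimize the termwise manipulation in $L^2$, I would use the hypothesis $f\in C^2$ of compact support vanishing at zero: two integrations by parts against the Sturm--Liouville equation $\mathcal{G}\varphi_\lambda=-\lambda\varphi_\lambda$ yield $c_n=\lambda_n^{-1}\langle\varphi_n,\mathcal{G}f\rangle_{\mathfrak{m}}$ and an $O(\lambda^{-1})$ bound on the continuous coefficients. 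Combined with the Mittag-Leffler estimate \eqref{Mittag_bound} and the $O(\lambda^{-3/2})$ integrability established in the proofs of the transition-density theorems, these bounds give absolute convergence in $L^{2}((0,\infty))$ of the resulting series and integrals, even after producing the extra factor $\lambda$ that accompanies $\mathcal{L}$ or $\partial_t^\alpha$. The initial condition $q(\cdot,0)=f$ in $L^2$ then follows from the completeness of the spectral expansion established in \cite{LeonenkoSuvak_RG_2010} and \cite{AvramLeonenkoSuvak_FS_SR_2013}.

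The main obstacle is justifying the termwise application of $\mathcal{L}$ to the continuous spectral integral, since differentiating the eigenfunctions $\psi(x,-\lambda)$ or $f_1(x,-\lambda)$ twice in $x$ could produce $\lambda$-growth that degrades the $O(\lambda^{-3/2})$ bound. The cleanest way around this is to proceed by duality: for $\phi\in C_c^\infty((0,\infty))$, integration by parts gives $\int\phi\,\mathcal{L}q\,dx=\int q\,\mathcal{G}\phi\,dx$ since $\mathcal{L}$ and $\mathcal{G}$ are formal Lebesgue-adjoints, and the analysis then reduces to the expansions of the smooth functions $\phi$ and $\mathcal{G}\phi$, for which the spectral coefficients decay at the required rate. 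This delivers $\partial_t^\alpha q=\mathcal{L}q$ in the distributional sense, and its membership in $L^{2}((0,\infty))$ for each fixed $t>0$, which also follows from the bounds above, upgrades the equality to an $L^2$ identity and closes the argument.
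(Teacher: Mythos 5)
Your argument is essentially correct but takes a genuinely different route from the paper's. You verify the equation directly on the spectral representations: the identity $\mathcal{L}(\mathfrak{m}\varphi)=\mathfrak{m}\,\mathcal{G}\varphi$ (which you derive correctly from $(\sigma^{2}\mathfrak{m})'=2\mu\mathfrak{m}$) turns each term of \eqref{frtranrg} and \eqref{frtranf} into an eigenfunction of the Fokker--Planck operator, the Caputo derivative acts on $\mathcal{E}_{\alpha}(-\lambda t^{\alpha})$ by multiplication by $-\lambda$, and you then work for enough decay in $\lambda$ (integration by parts against the Sturm--Liouville equation, the Mittag--Leffler bound, and finally duality against $C_c^{\infty}$ test functions) to legitimize the termwise manipulations in $L^{2}$. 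The paper instead stays entirely at the semigroup level: the generator is self-adjoint, the Fokker--Planck operator is its Lebesgue adjoint on the stated domain, so by \cite[Corollary 10.6]{Pazy_1983} the adjoint family $T_t^{*}f(x)=\int p(x,t;y)f(y)\,dy$ is again a $C_0$-semigroup generated by the Fokker--Planck operator; \cite[Proposition 3.1.9]{Arendt} gives the non-fractional Cauchy problem and \cite[Theorem 3.1]{BaeumerMeerschaert_2001} converts it to the time-fractional one by subordination, with no termwise differentiation of the spectral integral at all. Your approach buys explicitness --- it exhibits the eigenfunction expansion of the solution and shows where each hypothesis on $f$ enters --- but it leaves several estimates to be nailed down that the abstract argument bypasses: the boundary terms at $0$ in your integrations by parts (vanishing of $f$ at zero alone does not obviously kill the term involving $f'$ and the eigenfunction), the interchange of the Caputo time-integral with the spectral integral, and the completeness statement behind $q(\cdot,0)=f$. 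Your observation that differentiating $\psi(x,-\lambda)$ or $f_{1}(x,-\lambda)$ twice in $x$ may degrade the $O(\lambda^{-3/2})$ bound, and your consequent retreat to a distributional formulation upgraded to an $L^{2}$ identity, is exactly the concession the paper itself makes by claiming only an $L^{2}$ solution and leaving pointwise strong solutions of the fractional Fokker--Planck equation open.
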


\begin{proof}
As discussed in Section \ref{PD}, the generator of the reciprocal gamma and Fisher-Snedecor diffusions defined on the space of functions \eqref{domain}
with $(l, L)=(0, \infty )$ is self-adjoint. We consider the space $L^2((0,\infty ))$ without the weight $\m $, and the generator defined on a subset of its domain, namely on the set of  functions $f \in L^{2}\left((0,\infty )\right) \cap C^2\left( (0,\infty ) \right)$ such that $ f $ vanishes at 0 and has compact support.

The Fokker-Planck operator
\begin{equation*}
Lf(x)=-\frac{\partial}{\partial x}\left(\mu(x)f(x) \right)+\frac{1}{2}\frac{\partial^2}{\partial x^2}\left(
        \sigma^2(x) f(x) \right)
\end{equation*}
is adjoint to the generator of the diffusion $\mathcal G$ on this subset of $L^2((0,\infty ))$. The semigroup $T_t$ is a $C_0$-semigroup in this space as well as in the space of continuous functions with the supremum norm. From \cite[Corollary 10.6]{Pazy_1983}, the adjoint semigroup $T^*_t$
\begin{equation*}
T^*_tf(x)=\int _l^L p (x,t;y)f(y)dy
\end{equation*}
is the $C_0$-semigroup as well, and its generator is the Fokker-Planck operator.

Since $T^*_t f$ solves the non-fractional Cauchy problem for the Fokker-Planck equation
 \cite[Proposition 3.1.9] {Arendt}, the application of Theorem 3.1 in \cite{BaeumerMeerschaert_2001} completes the proof.
\end{proof}

\bigskip \bigskip

\textbf{Acknowledgements} \newline

The authors would like to thank to Prof. Kre\v{s}imir Burazin
(Department of Mathematics, J.J. Strossmayer University of Osijek) and to
Prof. \'{A}rp\'{a}d Baricz (Department of Economics, Babe\c o-Bolyai
University, Cluj-Napoca, Romania and Institute of Applied Mathematics, John
von Neumann Faculty of Informatics, \'{O}buda University, Budapest,
Hungary) for many useful remarks and discussions.

 N. Leonenko was supported in particular by Cardiff Incoming Visiting Fellowship
Scheme and International Collaboration Seedcorn Fund,
  Cardiff  Data Innovation Research Institute Seed Corn Funding, Australian Research
Council's Discovery Projects funding scheme (project number DP160101366), and by
projects MTM2012-32674 and MTM2015--71839--P (co-funded with Federal funds), of the
DGI, MINECO, Spain.

I. Papi\'c and N. \v{S}uvak were supported by the project number IZIP-2014-7
"Fractional Pearson diffusions" funded by
the J.~J. Strossmayer University of Osijek.

A. Sikorskii was supported in part by grants from the National Institutes of Health R01CA193706, R01CA162401, R01CA157459, R01HD073296, R01HD070723.

\newpage

\bibliographystyle{agsm}

\bibliography{FPD_references}

\end{document}